\documentclass[a4paper,english]{article}


\usepackage{lastpage}
\usepackage{amssymb,amsmath}
\usepackage{graphicx}
\usepackage[sf,SF]{subfigure}
\usepackage{caption}
\usepackage{amsthm,euscript}
\usepackage[a4paper, portrait, margin=1in]{geometry}
\usepackage[bookmarks=true,hidelinks]{hyperref}
\usepackage{units}


\newtheorem*{maintheorem}{Main Theorem}
\newtheorem{theorem}{Theorem}
\newtheorem{proposition}[theorem]{Proposition}

\newtheorem{remark}[theorem]{Remark}

\newtheorem{lemma}[theorem]{Lemma}

\numberwithin{equation}{section}
\numberwithin{theorem}{section}
\numberwithin{table}{section}

\newcommand{\Leb}{\EuScript{L}}
\newcommand{\B}{\EuScript{B}}
\newcommand{\R}{\mathbb{R}}

\newcommand{\Z}{\mathbb{Z}}
\newcommand{\Dx}{{\Delta x}}
\newcommand{\Dt}{{\Delta t}}
\renewcommand{\leq}{\leqslant}
\renewcommand{\geq}{\geqslant}
\renewcommand{\phi}{\varphi}
\newcommand{\Lip}{\mathrm{Lip}}

\newcommand{\hf}{{\unitfrac{1}{2}}}
\newcommand{\supp}{\mathrm{supp}}
\newcommand{\sgn}{\mathrm{sgn}}
\renewcommand{\epsilon}{\varepsilon}
\newcommand{\iphf}{{i+\hf}}

\newcommand{\imhf}{{i-\hf}}

\newcommand{\cell}{{\mathcal{C}}}


\allowdisplaybreaks

\title{Convergence rates of the front tracking method for \\ conservation laws in the Wasserstein distances}
\author{Susanne Solem}
\date{}

\begin{document}

\maketitle

\begin{abstract}We prove that front tracking approximations to scalar conservation laws with convex fluxes converge at a rate of $\Dx^2$ in the 1-Wasserstein distance $W_1$. Assuming positive initial data, we also show that the approximations converge at a rate of $\Dx$ in the $\infty$-Wasserstein distance $W_\infty$. Moreover, from a simple interpolation inequality between $W_1$ and $W_\infty$ we obtain convergence rates in all the $p$-Wasserstein distances: $\Dx^{1+\unitfrac{1}{p}}$, $p \in [1,\infty]$.
\end{abstract}

\section{Introduction}
\label{sec:intro}
In this paper we will consider front tracking approximations to the scalar conservation law
\begin{equation}\label{eq:cons_law}
\begin{split}
u_t +f(u)_x = 0,& \qquad x \in \R, \ t >0, \\
u(x,0) = u_0(x),&
\end{split}
\end{equation}
where $f$ is convex, $u_0$ is of compact support and $\Lip^+$ bounded. A function $v$ is said to be $\Lip^+$ bounded if
\begin{equation}\label{eq:OSLC}
\frac{v(x+z)-v(x)}{z} \leq C, \qquad \forall \ x,z \in \R, \quad z \neq 0,
\end{equation}
for some constant $C$. Under these assumptions on $f$ and $u_0$, it is well-established that \eqref{eq:cons_law} admits an unique entropy solution $u$ and that $u(t)$ satisfies \eqref{eq:OSLC} for any $t > 0$, see for example \cite{Oleinik59, Oleinik63, Volpert67, Kruzkov70}. The aim of this paper is to prove the following theorem.
\begin{maintheorem}
Let $f$ be convex and let $u_0$ be of compact support. Then the front tracking approximations of \eqref{eq:cons_law} converge to the unique entropy solution of \eqref{eq:cons_law} at a rate of $\Dx^2$ in the 1-Wasserstein distance. If in addition $u_0 \geq 0$, the approximations converge at a rate of $\Dx$ in the $\infty$-Wasserstein distance, and hence, they converge at a rate of $\Dx^{1+\unitfrac{1}{p}}$ in the $p$-Wasserstein distance $W_p$ for any $p \in [1,\infty]$.
\end{maintheorem}

\noindent See Section \ref{sec:result} for a complete statement of the theorem. This result demonstrates that front tracking approximations (for the class of initial data considered here) converge at a higher rate in every Wasserstein distance than the optimal rate of $\Dx$ in the usual metric $L^1$. Thus it supports the argument that the Wasserstein distances are well-suited to measure the approximation error of solutions to \eqref{eq:cons_law}, which we started in \cite{FjoSo16} (for $W_1$) and continue below. Furthermore, these convergence results gives the front tracking method an advantage, in terms of guaranteed convergence rate, over formally higher-order finite volume approximations to \eqref{eq:cons_law} (for which no convergence rate estimate exists for general initial data in either $L^1$ or $W_p$).

In order to prove the main theorem, we will need (and establish) some properties of solutions to\eqref{eq:cons_law}. Among other things, we will prove stability estimates in $W_1$ and $W_\infty$, show that the support of the solution $u(t)$ is connected if the support of $u_0$ is, and revive a result by Ole\u{\i}nik \cite{Oleinik59}.

\subsection{The Wasserstein distances}
The $p$-Wasserstein distance (or $W_p$-distance), also called the $p$-Monge--Kantorovich distance, is a metric on the set of probability measures with finite $p$th order moment, and for two probability measures $\mu$ and $\nu$ on $\R^d$ it takes the form
\begin{align}\label{eq:the_wp_distance}
W_p(\mu,\nu)= \left( \inf_{\pi \in \Pi(\mu,\nu) } \int_{\R^{2d}} |x-y|^p d \pi(x,y) \right)^{\unitfrac{1}{p}}, \quad p \in [1,\infty),
\end{align}
where the infimum is taken over all measures $\pi$ on $\R^{2d}$ with marginals $\mu$ and $\nu$. See \cite{Vil03} for further details. The $W_\infty$-distance,
\begin{equation}\label{eq:Winfty}
W_\infty(\mu,\nu) = \lim_{p \to \infty} W_p(\mu,\nu),
\end{equation}
is a metric on the space of probability measures with bounded support. 
Although normally only defined for probability measures, the $p$-Wasserstein distance between two Borel measurable functions $u,v \geq 0 $, each of the same finite mass and with finite $p$th order moment for $1\leq p<\infty$,
\begin{align}\label{eq:wpconditions}
\int_{\R^d} (u-v)(x) dx = 0, \quad \int_{\R^d} |x|^p u(x) dx < \infty, \quad \int_{\R^d} |x|^p v(x) dx < \infty,
\end{align}
and of compact support for $p=\infty$, $W_p(u,v):=W_p(u\Leb,v\Leb)$ is well-defined. Here $\Leb$ denotes the Lebesgue measure.

All the $W_p$-distances are suited to measure the difference between (approximate) solutions to \eqref{eq:cons_law}. If $u_0,v_0$ initially fulfil the conditions \eqref{eq:wpconditions}, then the two solutions $u(t), v(t)$ of \eqref{eq:cons_law} (possibly with different flux functions $f,g$ for $u,v$ respectively) will satisfy \eqref{eq:wpconditions} at any later time $t$ due to conservation of mass and finite speed of propagation. Hence, $W_p\bigl(u(t),v(t)\bigr)$ will be well-defined and finite as long as $W_p(u_0,v_0)$ is. To some extent, one can argue that the Wasserstein metrics are natural distances associated to \eqref{eq:cons_law}. Indeed, heuristically the $W_p$ metrics measure the minimum ``cost''  of transporting mass from one measure to another, and transporting quantities (of ``mass'') is exactly what \eqref{eq:cons_law} does.

The 1-Wasserstein distance seems to be particularly suitable in the context of \eqref{eq:cons_law}. To see why, consider the shock and its approximation (stipled) in Figure \ref{fig:wasserstein}(a). The $L^1$-distance, which is commonly used to measure approximation errors of \eqref{eq:cons_law}, measures the area (in grey) between the two solutions. The height is $O(1)$ and the width $O(\Dx)$. Hence, the $L^1$-error between the two solutions is $O(1)\cdot O(\Dx)=O(\Dx)$. The $W_1$-distance can be thought of as measuring the minimal amount of work (mass $\times$ distance) required to move mass from one measure to another. In Figure \ref{fig:wasserstein}(a) this means that $W_1$ measures the work needed to move the surplus of mass to the right of the shock (light grey) to the shortage of mass to the left of the shock (dark grey). The mass (area) to be moved is $O(\Dx)$, and it needs to be moved a distance $O(\Dx)$. It follows that the $W_1$-error is $O(\Dx)\cdot O(\Dx)=O(\Dx^2)$. The difference in the convergence rate between $L^1$ and $W_1$ for shock solutions has already been observed in the case of monotone finite volume scheme approximations. Teng and Zhang \cite{zhang_teng} obtained a convergence rate of $O(\Dx)$ in $L^1$ for solutions consisting of a finite number of decreasing shocks, whereas the rate was improved to $O(\Dx^2)$ in $W_1$ in \cite{FjoSo16}.

We apply the same reasoning to the $W_p$-distance by replacing the distance function $|\cdot|$ with $|\cdot|^p$ and taking the $p$th root to find that the $W_p$-approximation error in Figure \ref{fig:wasserstein}(a) is  $( O(\Dx)\cdot O(\Dx^p))^{\unitfrac{1}{p}}=O(\Dx^{1+\unitfrac{1}{p}})$.

It is not given that there is always a gain in the convergence rate by utilizing one of the Wasserstein distances instead of the $L^1$-distance. Figure~\ref{fig:wasserstein}(b) depicts one such counterexample. Let the $L^1$-error between the solution and its approximation (stipled) be $O(\Dx)$. If the distance between the surplus of mass (light grey) and the shortage of mass (dark grey) is $O(1)$, the error will be $(O(\Dx)\cdot O(1))^{\unitfrac{1}{p}}=O(\Dx^{\unitfrac{1}{p}})$ in $W_p$. Therefore, to obtain a higher rate in the $W_p$-distances, the approximation of the initial data can only redistribute small amounts of mass over small intervals. Furthermore, this redistribution of mass between the approximate and exact solution has to be (close to) preserved at any later time. In this paper we will see that this is the case for the front tracking approximation (which is a first order approximation in $L^1$) and, as a consequence, obtain the $O(\Dx^{1+\unitfrac{1}{p}})$-rate in $W_p$.

\begin{figure}
\centering
\subfigure[Exact and approximate shock solution of \eqref{eq:cons_law}.]{\includegraphics[width=0.4\textwidth]{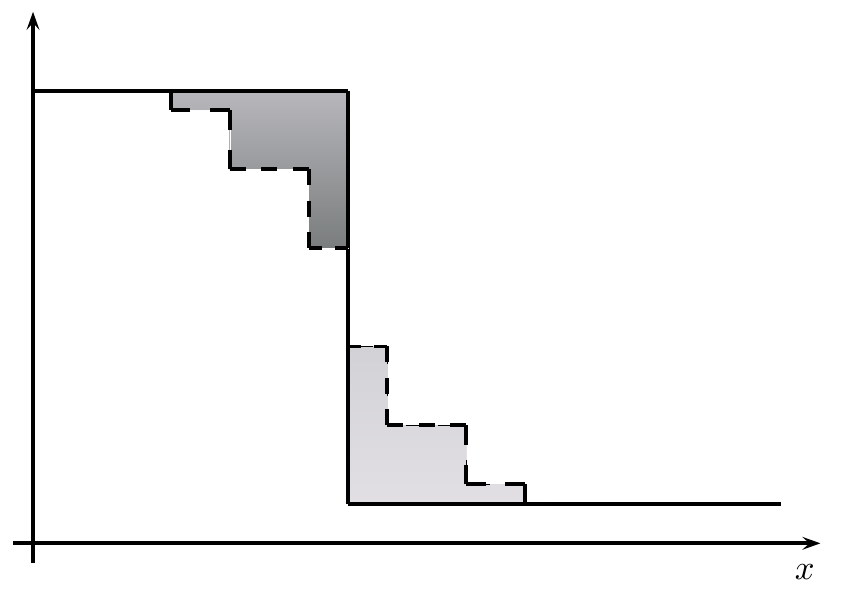}}
\subfigure[Exact and bad (in a $W_1$ sense) approximate solution of \eqref{eq:cons_law}.]{\includegraphics[width=0.4\textwidth]{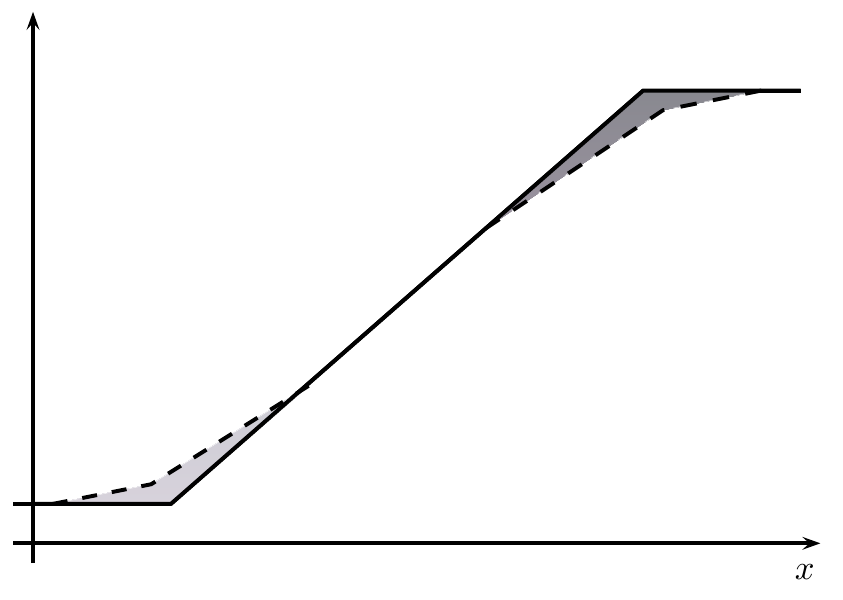}\hspace{4ex}}
\caption{The $W_1$-distance measures the amount of work required to move mass from one place (dark grey) to another (light grey).}
\label{fig:wasserstein}
\end{figure}

Lastly, Carrillo et al. \cite{Caretal2006} have shown that the $W_\infty$-distance is contractive with respect to initial data for solutions of \eqref{eq:cons_law} --- a property that will be exploited in this paper.

\subsection{Front tracking, finite volume methods and convergence rates}

The front tracking method was first proposed by Dafermos \cite{Daf72}. Later, Holden et al. \cite{HHK88} rediscovered it, extended it to non-convex fluxes and showed that it is a viable numerical method in one dimension. The main strength of the one-dimensional front tracking method is that the approximation is itself an entropy solution to a conservation law. We will make use of this strength in this paper by first proving general stability results of \eqref{eq:cons_law} in both $W_1$ and $W_\infty$ and then applying them to the front tracking method in order to obtain the respective $\Dx^2$ and $\Dx$ rates.

Up to this point the $W_1$-distance is the only one among the Wasserstein distances that has been applied in order to study convergence rates of approximations to \eqref{eq:cons_law}. Tadmor et al.\ \cite{Tad91,nessy_conv92,nessy_conv} extensively examined it in the context of conservation laws, but under the different name of the \emph{Lip'-norm}. They showed (among other things) that a large class of monotone finite difference methods converge at a rate of $\Dx$ in the Lip'-norm for initial data $u_0$ of compact support satisfying \eqref{eq:OSLC}. By applying their technique to the front tracking approximation, one obtains the rate $\Dx$ in $W_1$.

Due to the structure of the front tracking method, the (provable) convergence rate of this approximation is usually higher than the one for monotone methods. This can be observed in the above for the rates in $W_1$ ($\Dx^2$ for front tracking and $\Dx$ for monotone schemes), and can also be noticed for the rates in $L^1$. By applying a well-known stability result in the $L^1$ norm, first proved by Lucier \cite{Luc86}, one attains the (optimal) convergence rate $\Dx$ in $L^1$ of the front tracking approximation. However, the most generic result on convergence rates of monotone methods is the $O(\Dx^\hf)$ rate in $L^1$ due to Kuznetsov \cite{Kuz76}. A counterexample due to \c Sabac shows that the $\Dx^\hf$ rate for monotone methods is sharp and cannot be improved without further assumptions on the initial data \cite{Sab97}. But, even though it is not proved, it should be noted that numerical evidence indicates that the convergence rate is close to $\Dx$ for monotone schemes as well in the case of more ``natural'' initial data. The rate of $O(\Dx)$ in $L^1$ for a finite number of travelling shocks in \cite{zhang_teng} endorses these observations.

The first proof of a second-order convergence rate of any numerical method to \eqref{eq:cons_law} was provided in $L^1$ by Lucier for a specific piecewise linear extension of the front tracking method \cite{Luc86}. In this paper we prove the same rate in $W_1$ without modifying the original method.

In \cite{KR02} Karlsen and Risebro demonstrate the equivalence between entropy solutions of conservation laws and viscosity solutions of the Hamilton--Jacobi equations by utilizing the front tracking method. As a by-product they discover the rate $\Dx^2$ in the $L^\infty$ distance between the primitives of the front tracking approximation and the entropy solution. This result is closely related to the rate of $\Dx^{2}$ in $W_1$ that we obtain in this paper, see Remark~\ref{rem:hjft}. Hong \cite{hong96} proved a stability result in $L^\infty$ for the Hamilton--Jacobi equations, from which one can also deduce a $\Dx^2$ rate.

Apart from the second-order rate results for front tracking type methods in \cite{Luc86, KR02}, the only other proof of a second-order rate in any norm of any numerical method for \eqref{eq:cons_law} is, to the authors knowledge, the $\Dx^2$ rate in the 1-Wasserstein distance in \cite{FjoSo16}. 

Except for a piecewise constant projection of the initial data, the one-dimensional front tracking method is grid independent. A simple way to extend the method to the multi-dimensional case, is by dimensional splitting, see \cite{HR93}. The multi-dimensional extension is no longer the exact solution of a conservation law, and the accuracy of the method depends on the temporal grid size $\Dt$. Thus, the method is no longer grid independent, and the convergence rate of the method might decrease. The two-dimensional method is proven to converge at a rate of $O(\Dt^\hf + \Dx^\hf)$ in $L^1$ \cite{Karlsen94, Teng94}, which is the same rate as the one Kuznetsov proved for multi-dimensional monotone schemes. Whether the rate improves in $W_1$, is not known. All studies of convergence rates of approximations to \eqref{eq:cons_law} in $W_1$, including the one in this paper, rely on a one dimensional interpretation of $W_1$ which does not extend to multiple dimensions.

The $W_1$ rate in this paper shows that the front tracking approximation to \eqref{eq:cons_law} can be considered a second-order method when applying a suitable metric (although this might be restricted to one dimension). Indeed, one can observe numerically that the front tracking approximation converges at the same rate $\Dx^2$ as a second-order finite volume method in $W_1$. Furthermore, the $\Dx$ rate in $W_\infty$ conveys that displaced mass in the front tracking approximation compared to the exact solution of \eqref{eq:cons_law} is moved at most a distance $\Dx$.

Next follows an outline of this paper. In Section \ref{sec:result} we provide a short basis for the upcoming results and associated proofs before stating the main theorem. Section \ref{sec:w1} contains stability estimates in the $W_1$-distance, which provide the convergence rate in $W_1$. Section \ref{sec:winfty} is devoted to the proof of the rate in $W_\infty$. Lastly, Section \ref{sec:conclude} contains remarks on possible extensions of the main theorem.

\section{Front tracking, Wasserstein metrics and main theorem}\label{sec:result}
\subsection{The front tracking method}
\label{sec:fronttrack}
These are the main ingredients in the front tracking method. Approximate the initial data $u_0$ in \eqref{eq:cons_law} by a piecewise constant function $u_0^\Dx$ and the flux $f$ by a piecewise linear function $f^\delta$. Then solve the resulting conservation law
\begin{equation}\label{eq:ft}
u_t + f^\delta (u)_x=0, \qquad u(x,0)=u_0^\Dx(x),
\end{equation}
exactly. As $u_0^\Dx$ is piecewise constant, the initial problem will be to solve a series of independent Riemann problems, each of them having a wave-front traveling with constant speed, due to $f$ being piecewise linear, as a solution. Whenever two fronts meet, we restart the procedure by solving \eqref{eq:ft} with initial data $u(x,0)=u^{\delta,\Dx}(x,t^c)$, where $t^c$ is a interaction time. In this way we can find $u^{\delta,\Dx}(x,t)$ for all times. The resulting solution $u^{\delta,\Dx}$ is the unique entropy solution to \eqref{eq:ft}.

As the Wasserstein distances require that the functions to be compared have equal mass, we approximate the initial data as
\begin{equation}\label{eq:ftinitial}
u^\Dx_0(x) = u_i := \frac{1}{\Dx}\int_{\cell_i} u_0(y) dy, \qquad x \in \cell_i := [x_\imhf,x_\iphf), \quad i \in \Z,
\end{equation}
where $x_\iphf=(\iphf)\Dx$, $\Dx>0$, to ensure that $\int_\R (u_0-u^\Dx_0) \ dx = 0$. (In general one can use piecewise constant approximations that are not necessarily tied to the grid on $\R$ or preserves the mass.) The front tracking flux $f^\delta$ is a piecewise linear approximation to $f$ of the following form
\begin{equation}\label{eq:ftflux}
f^\delta (u) = f(j\delta ) + (u -j\delta) \frac{f\bigl((j+1)\delta\bigr) -f\bigl(j\delta\bigr)}{\delta}, \qquad u \in (j\delta,(j+1)\delta],
\end{equation}
for $j \in \Z \cap [-(M+1)/\delta, M/\delta]$, where $M=\|u_0\|_{L^\infty(\R)}$ and $\delta = O(\Dx)$. See \cite{Daf72, HHK88, HR15, Luc86} for more details on the method. 

\subsection{The Wasserstein distances in one dimension}\label{sec:wassonedim}
Without loss of generality, let $\int_\R |u(x)| \ dx = 1$ in \eqref{eq:cons_law} from this point on. We define the two spaces
\begin{align*}
\B_p &:= \left\{ u \in L^\infty(\R):  u \geq 0, \int_\R u(x) \ dx = 1, \int_\R |x|^p u(x) dx < \infty  \right\}, \quad p \in [1,\infty),\\
\B &:= \left\{u \in L^\infty(\R) : u \geq 0, \ \supp(u) \ \textrm{compact}, \int_\R u(x) \ dx = 1  \right\},
\end{align*}
for ease of notation. 

In one dimension, the $p$-Wasserstein distance \eqref{eq:the_wp_distance} between $u$ and $v$ both in $\B_p$, has a simple interpretation as the $L^p$-distance between the pseudo-inverses of the distribution functions \cite{CarTosc05, Vil03},
\begin{align}\label{eq:antiderivatives}
U(x) = \int_{-\infty}^x u(y) \ dy, \quad V(x) = \int_{-\infty}^x v(y) \ dy.
\end{align}
The pseudo-inverses $U^{-1}: [0,1] \to \R$, $V^{-1}: [0,1] \to \R$ are defined as
\begin{align*}
U^{-1}(\xi) = \inf \{x: \ U(x) > \xi \}, \quad V^{-1}(x) = \inf \{x: \ V(x) > \xi \}.
\end{align*}
Then the $p$-Wasserstein distance, $p \in [1, \infty)$ is 
\begin{equation}\label{eq:wp_onedim}
W_p(u,v) = \| U^{-1} - V^{-1}\|_{L_p([0,1])}.
\end{equation}
When $u,v \in \B$, we can interpret the $W_\infty$-distance in the same way using \eqref{eq:Winfty},
\begin{align} \label{eq:winfty_onedim}
W_\infty (u,v) = \lim_{p \to \infty} \| U^{-1} - V^{-1}\|_{L_p([0,1])} = \| U^{-1} - V^{-1}\|_{L_\infty([0,1])}.
\end{align}
In particular, the $W_1$-distance takes the very simple form
\begin{equation}\label{eq:wassalternative}
W_1(u,v) = \int_{\R}\left| U-V \right| dx,
\end{equation}
which can be found by using Fubini's theorem with \eqref{eq:wp_onedim}. Notice that for the alternative form \eqref{eq:wassalternative} of $W_1$ to be well-defined, $u$ and $v$ only need to satisfy 
\begin{align}\label{eq:wassercondition}
\int_\R (u-v)(x) dx = 0 , \qquad \int_\R |x||u-v|(x) dx < \infty.
\end{align}

\subsection{Connection to the Hamilton--Jacobi equation}
There is a well-known equivalence between the viscosity solution of the Hamilton--Jacobi equation  
\begin{equation}\label{eq:HJ}
U_t +f(U_x) = 0, \qquad  \int^x u_0 \ dx := U_0 \in BUC(\R)
\end{equation}
and the entropy solution of \eqref{eq:cons_law} with $u_0 \in BV(\R)$ through the relation
\begin{equation}\label{eq:equivalence}
\partial_x U = u, \qquad U = \int^x u \ dx,
\end{equation}
see \cite{KR02} and references therein. If $U_0$ is Lipschitz continuous, bounded and $f$ is convex and superlinear,
\begin{align*}
\lim_{|u| \to \infty} \frac{f(u)}{|u|} = \infty,
\end{align*}
then the unique viscosity solutions of \eqref{eq:HJ} can be found by the Hopf--Lax formula
\begin{equation}\label{eq:laxhopf}
U(x,t) = \min_{y \in \R}\left\{tf^*\left(\frac{x-y}{t} \right) + U_0(y) \right\},
\end{equation}
where $f^*$ is the Legendre transform of $f$,
\begin{align}\label{eq:legendre}
f^*(p) = \sup_{u \in \R} \left\{pu-f(u)\right\} 
\end{align}
(see for example Evans \cite[Ch. 3.3, Ch. 10.3.4]{Evans}). Then according to \eqref{eq:equivalence}, we get the entropy solution of \eqref{eq:cons_law} by differentiating \eqref{eq:laxhopf} with respect to $x$.

\subsection{Main theorem}
The main theorem relies on the following interpolation result. 
\begin{lemma}
If $u,v \in \B$, then 
\begin{equation}\label{eq:inter_ineq}
W_p(u,v) \leq W_1(u,v)^{\unitfrac{1}{p}}W_\infty(u,v)^{1-\unitfrac{1}{p}},
\end{equation}
for $1 < p < \infty$.
\end{lemma}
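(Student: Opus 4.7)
The plan is to pass to the one-dimensional pseudo-inverse representation established in Section~\ref{sec:wassonedim} and then reduce the claim to a standard three-space $L^p$ interpolation estimate on the unit interval $[0,1]$. Concretely, since $u,v \in \B$ both have compact support and equal unit mass, the identities \eqref{eq:wp_onedim} and \eqref{eq:winfty_onedim} apply, so setting $g(\xi) := U^{-1}(\xi) - V^{-1}(\xi)$ on $[0,1]$ we have
\begin{equation*}
W_p(u,v) = \|g\|_{L^p([0,1])}, \quad W_1(u,v) = \|g\|_{L^1([0,1])}, \quad W_\infty(u,v) = \|g\|_{L^\infty([0,1])}.
\end{equation*}
The inequality \eqref{eq:inter_ineq} is then exactly the classical interpolation
\begin{equation*}
\|g\|_{L^p([0,1])} \leq \|g\|_{L^1([0,1])}^{\unitfrac{1}{p}} \|g\|_{L^\infty([0,1])}^{1-\unitfrac{1}{p}},
\end{equation*}
so it suffices to prove this estimate for any measurable $g$ on $[0,1]$.

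For this standard step I would simply factor $|g|^p = |g| \cdot |g|^{p-1}$ and bound $|g|^{p-1}$ pointwise by $\|g\|_{L^\infty}^{p-1}$:
\begin{equation*}
\int_0^1 |g(\xi)|^p \, d\xi \leq \|g\|_{L^\infty([0,1])}^{p-1} \int_0^1 |g(\xi)| \, d\xi = \|g\|_{L^\infty([0,1])}^{p-1}\|g\|_{L^1([0,1])}.
\end{equation*}
Taking the $p$th root of both sides and substituting back the Wasserstein identifications above yields \eqref{eq:inter_ineq} directly.

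There is no real obstacle here: the only thing to check carefully is that $u,v \in \B$ legitimately places us in the one-dimensional setting where the pseudo-inverse representation of $W_p$ holds for every $p \in [1,\infty]$ simultaneously, which is exactly what Section~\ref{sec:wassonedim} provides (in particular \eqref{eq:winfty_onedim} ensures $W_\infty$ is finite via compact support). Once that is in place, the inequality is a one-line consequence of the pointwise bound $|g|^{p-1}\leq \|g\|_{L^\infty}^{p-1}$.
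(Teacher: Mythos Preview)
Your proof is correct and follows essentially the same route as the paper: both pass to the pseudo-inverse representation $W_p(u,v)=\|U^{-1}-V^{-1}\|_{L^p([0,1])}$ and then bound $\int |g|^p$ by $\|g\|_{L^\infty}^{p-1}\|g\|_{L^1}$ (the paper phrases this step as H\"older with exponents $1$ and $\infty$, you write it as the pointwise bound $|g|^{p-1}\leq\|g\|_{L^\infty}^{p-1}$, which is the same thing). Taking the $p$th root gives \eqref{eq:inter_ineq} in both cases.
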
 
\begin{proof}
By H\"older's inequality and the representation \eqref{eq:wp_onedim},
\begin{align*}
W_p(u,v)^p = \| |F^{-1}-G^{-1}|^p \|_{L^1([0,1])} \leq& \ \| (F^{-1}-G^{-1}) \|_{L^1([0,1])}\| |F^{-1}-G^{-1}|^{p-1} \|_{L^\infty([0,1])} \\
= & \ \| F^{-1}-G^{-1} \|_{L^1([0,1])} \| F^{-1}-G^{-1}\|_{L^\infty([0,1])}^{p-1} \\
= & \ W_1(u,v)W_\infty(u,v)^{p-1}.
\end{align*}
By taking the $p$th root, we get the interpolation inequality \eqref{eq:inter_ineq}.
\end{proof}

\begin{theorem}\label{thrm:main}
Let $u$ be the entropy solution of \eqref{eq:cons_law} where $f$ is twice continuously differentiable and convex, and $u_0 \in BV(\R)$ be $\Lip^+$ bounded and of compact support. Then the front tracking approximation $u^{\delta,\Dx}$ of $u$ satisfies
\begin{align}
W_1\big(u(t),u^{\delta,\Dx}(t)\big) &\leq C \Dx^2 \nonumber
\intertext{for all $t \in [0,T)$ for any $T>0$. If in addition $u_0 \in \B $, then}
W_p\big(u(t),u^{\delta,\Dx}(t)\big) &\leq C \Dx^{1+\unitfrac{1}{p}}, \quad p \in [1,\infty], \nonumber
\end{align}
where $C:=C_{T,f,\supp(u_0)}$.
\end{theorem}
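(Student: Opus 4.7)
The plan is to reduce Theorem \ref{thrm:main} to the two endpoint estimates
\[
W_1\bigl(u(t),u^{\delta,\Dx}(t)\bigr) \leq C\Dx^2, \qquad W_\infty\bigl(u(t),u^{\delta,\Dx}(t)\bigr) \leq C\Dx,
\]
with the second established only under the extra positivity assumption $u_0 \in \B$. The intermediate $W_p$ rates then follow immediately from the interpolation inequality \eqref{eq:inter_ineq}:
\[
W_p \leq W_1^{\unitfrac{1}{p}}\,W_\infty^{1-\unitfrac{1}{p}} \leq C\Dx^{\unitfrac{2}{p}+1-\unitfrac{1}{p}} = C\Dx^{1+\unitfrac{1}{p}}
\]
for every $p \in (1,\infty)$, while the cases $p=1$ and $p=\infty$ are exactly the two endpoint bounds themselves.

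For the $W_1$ rate (to be established in Section \ref{sec:w1}), I would derive a general $W_1$-stability estimate comparing two entropy solutions of \eqref{eq:cons_law} with possibly different fluxes and initial data, of the schematic form
\[
W_1\bigl(u(t),v(t)\bigr) \leq W_1(u_0,v_0) + C\,t\cdot(\text{flux discrepancy}),
\]
and apply it to the pair $(u,u^{\delta,\Dx})$ --- recalling that $u^{\delta,\Dx}$ is itself the unique entropy solution of \eqref{eq:ft}. The initial contribution is $O(\Dx^2)$: by \eqref{eq:wassalternative} and the cell-averaging construction \eqref{eq:ftinitial}, the primitives $U_0$ and $U_0^\Dx$ agree at every cell interface, so on each cell $\cell_i$ their difference is bounded pointwise by $\Dx \cdot \mathrm{osc}(u_0;\cell_i)$, and summing yields $W_1(u_0,u_0^\Dx) \leq \Dx^2\cdot\mathrm{TV}(u_0)$. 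The flux-discrepancy term is likewise $O(\Dx^2)$, since $f^\delta$ is the piecewise linear interpolant of a $C^2$ function $f$ on a grid of width $\delta=O(\Dx)$.

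For the $W_\infty$ rate (Section \ref{sec:winfty}), I would invoke the $W_\infty$-contractivity of Carrillo et al.\ for solutions of \eqref{eq:cons_law} sharing the same flux --- a property that requires non-negativity, whence the hypothesis $u_0 \in \B$. Introducing an auxiliary solution $\tilde u$ of \eqref{eq:cons_law} with flux $f$ and initial data $u_0^\Dx$, the triangle inequality
\[
W_\infty\bigl(u,u^{\delta,\Dx}\bigr) \leq W_\infty(u,\tilde u) + W_\infty\bigl(\tilde u,u^{\delta,\Dx}\bigr)
\]
splits the task. By contractivity the first term is at most $W_\infty(u_0,u_0^\Dx) \leq \Dx$: mass is preserved on every cell of width $\Dx$, so no mass particle need be transported farther than one cell-width. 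The second term pairs the same initial datum under fluxes $f$ and $f^\delta$, and via the equivalence \eqref{eq:equivalence} combined with the Hopf--Lax formula \eqref{eq:laxhopf} it reduces to a pointwise estimate governed by the Legendre distance between $f^*$ and $(f^\delta)^*$, which is again $O(\Dx)$.

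The main obstacle is the $W_\infty$ estimate, specifically the second term in the triangle inequality above. Whereas $W_1$ averages displacements and is therefore forgiving to isolated wave-interaction errors, $W_\infty$ records the worst-placed single mass particle and so demands uniform control along every Lagrangian trajectory. The non-negativity hypothesis $u_0 \in \B$ is what lets the Carrillo et al.\ contractivity be invoked at all, and the $\Lip^+$ bound on $u_0$ will be essential to prevent rarefaction-induced mass redistribution in the limit solution that would otherwise degrade the $\Dx$ rate.
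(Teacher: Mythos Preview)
Your top-level architecture is correct and matches the paper exactly: reduce to the two endpoint bounds and interpolate via \eqref{eq:inter_ineq}. Your $W_\infty$ sketch is likewise essentially the paper's route (your auxiliary $\tilde u$ is the paper's $u^{\Dx}$, and the Hopf--Lax comparison of fluxes is Proposition~\ref{lem:winfty_flux} together with Lemma~\ref{lem:fg_zeroes}).

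The gap is in the $W_1$ argument. The $W_1$-stability estimate you describe (Proposition~\ref{lem:w1stability} in the paper) comes with a multiplicative factor
\[
C(t)\;=\;\exp\!\left(\int_0^t \sup_x\bigl(\partial_x a(u,v)(s)\bigr)^+\,ds\right),
\]
which is finite only when \emph{both} solutions are $\Lip^+$ bounded. But $u_0^\Dx$ is piecewise constant: wherever $u_0$ increases, $u_0^\Dx$ has an upward jump, so it fails \eqref{eq:OSLC}, and hence $u^{\delta,\Dx}(t)$ need not be $\Lip^+$ bounded either. Applying the stability estimate directly to the pair $(u,u^{\delta,\Dx})$ therefore gives no usable bound; the paper flags exactly this obstruction before Theorem~\ref{thrm:wass1}.

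The paper's fix is a genuine additional ingredient you have not identified. One introduces an intermediate initial datum $u_0^\sigma$, obtained from $u_0^\Dx$ by linearly interpolating across each increasing jump (so $u_0^\sigma$ \emph{does} satisfy \eqref{eq:OSLC}), and splits
\[
W_1\bigl(u,u^{\delta,\Dx}\bigr)\;\leq\; W_1\bigl(u,u^{\delta,\sigma}\bigr)+W_1\bigl(u^{\delta,\sigma},u^{\delta,\Dx}\bigr).
\]
The first term is handled by Proposition~\ref{lem:w1stability} as you outline. The second term compares two solutions of the \emph{same} equation $u_t+f^\delta(u)_x=0$ whose initial data differ only on the small triangles created by the interpolation; here the paper revives and extends a pointwise primitive-stability result of Ole\u{\i}nik (Theorem~\ref{thrm:oleinik}, Lemma~\ref{lem:lessthaneps}) to show that the bound $\sup_{x_1,x_2}\bigl|\int_{x_1}^{x_2}(u_0^{\sigma}-u_0^{\Dx})\bigr|\leq \tfrac18 C\Dx^2$ persists for all $t>0$, whence $W_1(u^{\delta,\sigma},u^{\delta,\Dx})=O(\Dx^2)$ via \eqref{eq:wassalternative}. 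Without this Ole\u{\i}nik-type step the $W_1$ argument does not close.
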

\begin{proof}
This follows directly from the second order rate in $W_1$ and the first order rate in $W_\infty$ to be proved in Theorem \ref{thrm:wass1} and Theorem \ref{thrm:wass_infty} respectively, and from the interpolation inequality \eqref{eq:inter_ineq}.
\end{proof}

\section{The convergence rate in $W_1$}\label{sec:w1}
We begin by providing two stability estimates in the 1-Wasserstein distance that will yield the second-order convergence rate.
\begin{proposition}\label{lem:w1stability}
Assume that $f$ is continuously differentiable such that $f'$ is locally Lipschitz. Let $f$ and $g$ both be convex, and let $u_0, v_0 \in BV (\R) $ satisfy \eqref{eq:wassercondition} and \eqref{eq:OSLC}. Then the entropy solutions $u$ and $v$ of
\begin{equation}\label{eq:2cons_laws}
\begin{gathered}
u_t + f(u)_x = 0, \qquad u(x,0)=u_0(x), \\
v_t + g(v)_x = 0, \qquad v(x,0)=v_0(x),
\end{gathered}
\end{equation}
satisfy
\begin{align}\label{eq:w1stability}
W_1\big( u(t),v(t) \big) \leq C(t) \left[W_1\big( u_0, v_0 \big) + \int_0^t \|(f-g)(v(s))\|_{L^1(\R)} ds \right].
\end{align}
\end{proposition}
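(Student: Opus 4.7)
The plan is to leverage the one-dimensional representation \eqref{eq:wassalternative} of $W_1$ in order to reduce the claim to an $L^1$-stability estimate for the associated Hamilton--Jacobi pair
$$U_t + f(U_x) = 0, \qquad V_t + g(V_x) = 0,$$
where $U, V$ are the antiderivatives defined in \eqref{eq:antiderivatives}. Since $W_1\bigl(u(t), v(t)\bigr) = \int_\R |U(x,t)-V(x,t)|\, dx$, the task reduces to bounding the time derivative of this integral.

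Formally differentiating in time and splitting the flux difference gives
\begin{align*}
\frac{d}{dt}\int_\R |U-V|\, dx &= -\int_\R \sgn(U-V)\bigl(f(u) - g(v)\bigr)\, dx \\
&= -\int_\R \sgn(U-V)\bigl(f(u)-f(v)\bigr)\, dx - \int_\R \sgn(U-V)\bigl(f(v)-g(v)\bigr)\, dx.
\end{align*}
The second integral is immediately controlled by $\|(f-g)(v(t))\|_{L^1(\R)}$. For the first, I would use the mean-value representation
$$f(u)-f(v) = a(x,t)(U-V)_x, \qquad a(x,t) := \int_0^1 f'\bigl(\theta u + (1-\theta) v\bigr)\, d\theta,$$
combined with $\sgn(U-V)(U-V)_x = |U-V|_x$ and an integration by parts (whose boundary contributions vanish since $U(\pm\infty,t) = V(\pm\infty,t)$ by the mass condition in \eqref{eq:wassercondition}) to obtain, at least formally,
$$-\int_\R \sgn(U-V)\bigl(f(u)-f(v)\bigr)\, dx = \int_\R |U-V|\, d a_x,$$
where $a_x$ is to be interpreted as a signed Radon measure.

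The crucial step is a one-sided upper bound on this measure. Since $f'' \geq 0$ and the Ole\u{\i}nik-type estimate (to be revived earlier in the paper) yields $u_x, v_x \leq K$ distributionally and uniformly on $[0,T]$, one has
$$a_x = \int_0^1 f''\bigl(\theta u + (1-\theta) v\bigr)\bigl(\theta u_x + (1-\theta) v_x\bigr)\, d\theta \leq K\,\|f''\|_{L^\infty([-M,M])} =: D,$$
where $M$ dominates $\|u(t)\|_\infty, \|v(t)\|_\infty$ (via the maximum principle). Feeding this back produces
$$\frac{d}{dt}\int_\R |U-V|\, dx \leq D\int_\R |U-V|\, dx + \|(f-g)(v(t))\|_{L^1(\R)},$$
and Gronwall's inequality then yields \eqref{eq:w1stability} with $C(t)=e^{Dt}$.

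The main obstacle is rigorously justifying these formal manipulations when $u, v$ are only $BV$: the function $\sgn(U-V)$ is ambiguous on $\{U=V\}$, the distributional derivatives $u_x, v_x$ carry negative Dirac masses at shocks, and the coefficient $a$ is itself only $BV$ (so $a_x$ is a measure that could have negative atoms). The natural remedy is a spatial mollification (or a Kru\v zkov-style doubling of variables) applied to the Hamilton--Jacobi pair, exploiting the structural fact that at entropy shocks the singular parts of $u_x, v_x$ are non-positive while $f'' \geq 0$; these parts therefore contribute $\leq 0$ to $a_x$, so only the one-sided Lip$^+$ bound survives in the limiting procedure, and the Gronwall argument above closes.
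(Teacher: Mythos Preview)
Your proposal is correct and follows essentially the same route as the paper: differentiate $W_1(u(t),v(t))=\int_\R |U-V|\,dx$ in time, split $f(u)-g(v)$ into $a(u,v)(U-V)_x$ plus $(f-g)(v)$, integrate by parts, use the $\Lip^+$ bound together with convexity of $f$ to control $(\partial_x a)^+$, and close with Gronwall. The paper is slightly terser about the regularity issues you flag (it simply interprets $D_x a(u,v)$ distributionally and notes $(D_x a)^+=(\partial_x a)^+$ under the $\Lip^+$ bound), and it writes the constant via $\|f'\|_{\Lip}$ rather than $\|f''\|_{L^\infty}$ in keeping with the hypothesis that $f'$ is merely locally Lipschitz, but these are cosmetic differences.
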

\begin{proof} As previously mentioned, since $u_0-v_0$ satisfies \eqref{eq:wassercondition}, $(u-v)(t)$ will also fulfil the same conditions by conservation of mass and finite speed of propagation. Hence $W_1(u(t),v(t))$ is well-defined and finite.

We start by differentiating \eqref{eq:wassalternative} with respect to $t$ and using \eqref{eq:antiderivatives},
\begin{align}
\frac{d}{dt} W_1(u,v) =& \int_\R \partial_t \left|(U-V)(x) \right| dx \nonumber\\
=& \int_\R \sgn \left(U-V\right)(x) \partial_t (U-V) dx = -\int_\R \sgn \left(U-V\right)(x) \left(f(\partial_x U) -g(\partial_x V) \right) dx \nonumber \\
= & - \int_\R a(u,v) \partial_x \left|U-V \right| dx + \int_\R \sgn \left(U-V\right)(x) \left( (g-f)(v(x)) \right) dx, \label{eq:inter_calc} 
\end{align}
where 
\begin{align*}
a(u,v) = \int_0^1 f'(\alpha u +(1-\alpha)v) d\alpha.
\end{align*}
Note that $U-V$ is differentiable in $t$ due to the Lipschitz continuity in time of $u$ and $v$ with respect to the $L^1$ norm. From an integration by parts we find that the first term in \eqref{eq:inter_calc} is
\begin{align*}
 - \int_\R a(u,v) \partial_x \left|U-V \right| dx = \int_\R  \left|U-V \right| D_x a(u,v) dx,
\end{align*}
where we give meaning to $D_x a (u,v)$ as a distributional derivative ($|U-V|$ is Lipschitz, but $a(u,v)$ might contain decreasing jumps). This leads to the following upper bound on the time derivative of the $W_1$-distance,
\begin{align*}
\partial_t W_1(u(t),v(t)) \leq \sup_{x \in \R} \left(\partial_x a(u,v)(t)\right)^+  W_1(u(t),v(t)) + \|(g-f)(v(t)) \|_{L^1(\R)}.
\end{align*}
Note that $(D_x a(u,v))^+=(\partial_x a(u,v))^+$ as $u$ and $v$ are $\Lip^+$ bounded. By Gr\"onwall's inequality we deduce that \eqref{eq:w1stability} holds with
\begin{align}\label{eq:C(t)}
C(t) := \exp \left( \| f'\|_\Lip C t \right) \geq \exp \left( \int_0^t \sup_{x \in \R} \left(\partial_x a(u,v)(s) \right)^+ ds \right),
\end{align}
as $a(u,v)$ is increasing in both $u$ and $v$ and both $u(t)$ and $v(t)$ satisfy \eqref{eq:OSLC}. The constant $C$ is the constant in \eqref{eq:OSLC}.
\end{proof}
A similar stability result was established by Nessyahu and Tadmor \cite{nessy_conv92} by studying the dual equation of $u-v$, i.e. the backward in time equation for the dual $\phi$ in the Kantorovich--Rubinstein formulation of the $W_1$-distance, see \cite[Thm. 1.14]{Vil03} for the definition.
\begin{remark}
If $u_0, v_0$ are non-increasing and $f=g$ in Proposition \ref{lem:w1stability}, then \eqref{eq:w1stability} reduces to the contraction estimate
\begin{align*}
W_1\big( u(t),v(t) \big) \leq W_1\big( u_0,v_0 \big).
\end{align*}
\end{remark}

In \eqref{eq:w1stability} it is necessary that $u(t)$ and $v(t)$ satisfy \eqref{eq:OSLC}. As front tracking approximations consist of piecewise constants, they will in general not fulfil this condition. In order to overcome this obstacle without risking to sacrifice the second-order convergence rate, we will utilize an old result by Ole\u{\i}nik \cite[Theorem 2]{Oleinik59}:

\begin{theorem}[Ole\u{\i}nik \cite{Oleinik59}]\label{thrm:oleinik}
Let $f$ be twice continuously differentiable (and not necessarily convex). Assume that $u$ and $v$ are two piecewise smooth solutions of \eqref{eq:cons_law} which satisfy Ole\u{\i}niks condition E. Then if
\begin{align*}
\left| \int_{y_1}^{y_2} u_0(y)-v_0(y) \ dy \right| \leq c \quad \text{for all $y_1,y_2 \in [a,b]$, then} 
\quad \left| \int_{x_1}^{x_2} u(y,t)-v(y,t) \ dy \right| \leq c
\end{align*} for all $x_1,x_2$ in the smaller interval $\bigl[a+Qt,b-Qt\bigr]$, $Q=\|f\|_\Lip$. 
\end{theorem}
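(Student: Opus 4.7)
The plan is to translate the statement into an oscillation bound on a primitive and then exploit finite speed of propagation for the associated Hamilton--Jacobi equation. Fix any antiderivatives $U(x,t) := \int_{-\infty}^x u(y,t)\,dy$ and $V(x,t) := \int_{-\infty}^x v(y,t)\,dy$ (the choice of additive constant is immaterial) and set $W := U - V$. The hypothesis $|\int_{y_1}^{y_2}(u_0 - v_0)(y)\,dy| \leq c$ for all $y_1, y_2 \in [a,b]$ is exactly the statement that $W(\cdot,0)$ oscillates by at most $c$ on $[a,b]$, and the conclusion is the analogous oscillation bound for $W(\cdot,t)$ on $[a+Qt, b-Qt]$, since $W(x_2,t) - W(x_1,t) = \int_{x_1}^{x_2}(u-v)(y,t)\,dy$.

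Assume first that $f$ is convex; this is the only case needed for the application to the piecewise linear front tracking flux $f^\delta$. Then $U$ and $V$ are the unique viscosity solutions of the Hamilton--Jacobi equation \eqref{eq:HJ} with initial data $U_0$ and $V_0$, and both are represented by the Hopf--Lax formula \eqref{eq:laxhopf}. The key geometric fact I would establish is that any minimiser $y^*$ achieving
\begin{align*}
U(x_0, t_0) = \min_{y \in \R}\bigl\{t_0 f^*\bigl((x_0-y)/t_0\bigr) + U_0(y)\bigr\}
\end{align*}
satisfies $|x_0 - y^*| \leq Q t_0$: the first-order optimality condition forces $(x_0 - y^*)/t_0$ to lie in $f'$ evaluated on the range of $u$, whose modulus is bounded by $Q = \|f\|_\Lip$. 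Consequently, if $x_0 \in [a + Qt_0, b - Qt_0]$ then every such minimiser lies in $[a,b]$; the same holds for $V$.

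Next I would compare $U$ and $V$ at the same point $(x_0, t_0)$ by substituting the minimiser for one formula into the other. If $y^*_V$ is a minimiser for $V(x_0,t_0)$, then
\begin{align*}
U(x_0, t_0) \leq t_0 f^*\bigl((x_0 - y^*_V)/t_0\bigr) + U_0(y^*_V) = V(x_0, t_0) + W(y^*_V, 0),
\end{align*}
so $W(x_0, t_0) \leq \max_{y \in [a,b]} W(y,0)$. A symmetric argument with a minimiser for $U$ yields $W(x_0, t_0) \geq \min_{y \in [a,b]} W(y,0)$. Therefore every value of $W(\cdot, t_0)$ on $[a+Qt_0, b-Qt_0]$ lies in a common interval of length at most $c$, which (by the identity noted in the first paragraph) is the desired bound.

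The main obstacle is that Oleinik's original statement allows non-convex $f$, in which case there is no explicit Hopf--Lax representation. The remedy is to replace backward minimisers by generalised backward characteristics in the sense of Dafermos: the piecewise smoothness of $u, v$ together with condition E guarantees their existence and that they do not cross shocks backwards in time, while finite speed of propagation keeps their initial endpoints inside $[x_0 - Qt_0, x_0 + Qt_0] \subset [a,b]$, after which a direct tracking of $W$ along such a characteristic replaces the explicit Hopf--Lax comparison. For the purposes of this paper only the convex case is needed, so the cleaner Hopf--Lax route suffices.
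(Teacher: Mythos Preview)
The paper does not supply its own proof of this theorem: it is quoted verbatim as Ole\u{\i}nik's result from \cite{Oleinik59} and used as a black box, the paper's own contribution being the extension in Lemma~\ref{lem:lessthaneps}. So there is no proof in the paper to compare your proposal against.

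That said, your convex-case argument via the Hopf--Lax formula is essentially correct and is a clean way to recover the result. Two minor technical points you skip over: the Hopf--Lax representation \eqref{eq:laxhopf} requires $f$ to be superlinear, which is not part of the hypothesis, so one should first modify $f$ outside $[-M,M]$ (with $M$ an $L^\infty$ bound on $u,v$) to make it superlinear without changing the solutions; and the minimiser $y^*$ need not be a point of differentiability of $U_0$, so the optimality condition should be phrased via the subdifferential, though the conclusion $|x_0-y^*|\leq Qt_0$ still follows since any element of $\partial f^*$ at a slope attained by $U_0$ lies in the range of $f'$ on $[-M,M]$. Your sketch for the non-convex case via generalised backward characteristics is the right idea and is in the spirit of Ole\u{\i}nik's original argument, but the step you describe as ``direct tracking of $W$ along such a characteristic'' is precisely where the work lies and is not spelled out; since only the convex case is needed downstream, this is acceptable for the purposes of the paper.
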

We will extend the above result to all $y_1,y_2,x_1,x_2 \in \R$ and to $f$ only locally Lipschitz. To ensure that the piecewise smoothness assumption in Theorem \ref{thrm:oleinik} is satisfied, we will assume that $f$ is convex.

\begin{lemma}\label{lem:lessthaneps}
Let $u_0, v_0 \in BV(\R)$. Consider the respective entropy solutions $u$ and $v$ of \eqref{eq:cons_law} where $f$ is assumed to be convex. If there exists $c>0$ s.t.
\begin{align}\label{eq:init_lessthaneps}
\left| \int_{y_1}^{y_2} u_0(y)-v_0(y) \ dy \right| & \leq c \\
\intertext{for all pairs $y_1,y_2 \in \R$, then}
\left| \int_{x_1}^{x_2} u(x,t)-v(x,t) \ dx \right| & \leq c \nonumber
\end{align}
for all $x_1,x_2 \in \R$ for any finite time $t>0$.
\end{lemma}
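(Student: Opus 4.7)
The plan is to reduce the $BV$ statement to Theorem \ref{thrm:oleinik} by combining a mollification of the initial data with the $L^1$-contraction of entropy solutions, after first replacing the locally Lipschitz flux $f$ by a globally Lipschitz one. Since $u_0,v_0\in BV(\R)$ are bounded, the maximum principle keeps the corresponding entropy solutions (and those of any approximation with a uniform $L^\infty$ bound on the data) inside a fixed compact set $[-M,M]$ with $M=\max(\|u_0\|_{L^\infty},\|v_0\|_{L^\infty})$. Replacing $f$ outside $[-M,M]$ by its affine extension yields a globally Lipschitz flux with constant $Q$ without altering any of the solutions, so I may assume $f$ globally Lipschitz throughout.

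Next I would fix $x_1<x_2$ and $t>0$ and choose $a<b$ such that $[x_1,x_2]\subset[a+Qt,b-Qt]$. The idea is to smooth the initial data by a standard mollifier $\rho_\epsilon$, setting $u_0^\epsilon:=u_0*\rho_\epsilon$ and $v_0^\epsilon:=v_0*\rho_\epsilon$. Fubini's theorem combined with \eqref{eq:init_lessthaneps} gives
\begin{align*}
\left|\int_{y_1}^{y_2}(u_0^\epsilon-v_0^\epsilon)(y)\,dy\right|
=\left|\int_\R\rho_\epsilon(z)\int_{y_1-z}^{y_2-z}(u_0-v_0)(y)\,dy\,dz\right|\leq c
\end{align*}
for all $y_1<y_2$, so the hypothesis \eqref{eq:init_lessthaneps} is inherited by the mollified pair at the same constant $c$, with no loss.

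Because $f$ is convex and $u_0^\epsilon,v_0^\epsilon$ are smooth, the associated entropy solutions $u^\epsilon,v^\epsilon$ are piecewise smooth in the sense needed by Theorem \ref{thrm:oleinik}, by the classical Ole\u{\i}nik regularity theory for convex scalar conservation laws. Applying that theorem on $[a,b]$ gives
\begin{align*}
\left|\int_{x_1}^{x_2}(u^\epsilon-v^\epsilon)(x,t)\,dx\right|\leq c,
\end{align*}
and the $L^1$-contraction of entropy solutions then lets me send $\epsilon\to 0$ inside the integral, since $u^\epsilon(t)\to u(t)$ and $v^\epsilon(t)\to v(t)$ in $L^1(\R)$. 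Because $x_1,x_2\in\R$ were arbitrary, the conclusion follows at once.

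The step I expect to require most care is verifying the piecewise smoothness hypothesis of Theorem \ref{thrm:oleinik} for the regularized problems; this is where convexity of $f$ is genuinely used, via the classical fact that for smooth data and convex flux the shock set is a countable union of smooth curves with the solution smooth elsewhere. Everything else is bookkeeping: the truncation of $f$ is justified by the maximum principle, the mollification step is a one-line Fubini computation, and the passage to the limit uses only $L^1$-stability.
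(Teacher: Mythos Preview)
Your reduction via mollification is cleaner than the paper's in one respect: the Fubini computation showing that the hypothesis \eqref{eq:init_lessthaneps} transfers to $u_0^\epsilon,v_0^\epsilon$ with the \emph{same} constant $c$ avoids the $\epsilon$-bookkeeping the paper incurs by approximating the initial data in $L^1$, and fixing $x_1,x_2$ first and then choosing $[a,b]$ large enough handles the unbounded domain more directly than the paper's compact-support approximations.

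There is, however, a genuine gap on the flux side. The lemma is stated for merely convex $f$, and in its application in Theorem~\ref{thrm:wass1} the relevant flux is the piecewise linear $f^\delta$ of \eqref{eq:ftflux}, which is neither twice continuously differentiable nor strictly convex. Theorem~\ref{thrm:oleinik} as stated requires $f\in C^2$, and the piecewise-smoothness fact you invoke (``for smooth data and convex flux the shock set is a countable union of smooth curves with the solution smooth elsewhere'') is, in the references the paper relies on (\cite{Daf77,TT93}), established under \emph{strict} convexity and smoothness of $f$. Your affine truncation outside $[-M,M]$ secures global Lipschitz continuity but does nothing for either of these hypotheses. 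The paper closes this gap by additionally approximating $f$ with a smooth strictly convex $f^\epsilon$ satisfying $\|f-f^\epsilon\|_{L^\infty}<\epsilon^3$ and then controlling the resulting discrepancy between solutions via the Bouchut--Perthame $L^1$ stability estimate with respect to the flux \cite{BoPert98}. Without an analogous flux-approximation step your argument only covers $C^2$ strictly convex fluxes, which is precisely the case excluded in the main application of the lemma.
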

\begin{proof}
We start by approximating the initial data $u_0$ and $v_0$ by smooth functions of compact support, $\tilde{u}_0$ and $\tilde{v}_0$, such that 
\begin{align}\label{eq:initial_approx}
\|\tilde{u}_0-u_0\|_{L^1(\Omega)} <\epsilon \quad \textrm{and} \quad  \|\tilde{v}_0-v_0\|_{L^1{(\Omega})} < \epsilon
\end{align}
on a finite interval $\Omega$ (to be determined). Then if $f$ is strictly convex and smooth, $\tilde{u}(t)$ and $\tilde{v}(t)$ will be piecewise smooth, see \cite{Daf77, TT93} for example, and Ole\u{\i}niks condition E in \cite{Oleinik59} will be satisfied. As both the approximate initial data are of compact support, the lemma then follows directly for $\tilde{u}(t),\tilde{v}(t)$ from Theorem \ref{thrm:oleinik} for strictly convex and smooth $f$.

As $u_0 \in BV(\R)$, $\|u_0\|_{L^\infty(\R)}\leq M$ for some constant $M$. We extend the result to Lipschitz continuous $f$ by approximating $f$ by a sequence of twice continuously differentiable strictly convex flux functions $f^\epsilon$ such that $\|f-f^\epsilon\|_{L^\infty(\R)} < \epsilon^3$. Such a function can be found by mollifying $f$ and then adding $a_\epsilon u ^2$ for a suitably small $a_\epsilon$ such that $a_\epsilon \to 0$ when $\epsilon \to 0$. Notice that we can choose $f^\epsilon$ such that $\|f^\epsilon\|_\Lip \leq \|f\|_\Lip + a_\epsilon M $ on $[-M,M]$. Let $\tilde{u}^\epsilon$ be the entropy solution of \eqref{eq:cons_law} with $f^\epsilon$ as a flux function and $\tilde{u}_0$ as initial data. From a $L^1$-stability estimate by Bouchut and Perthame \cite[Thm. 3.1~\textit{(iii)}]{BoPert98}, we find that
\begin{align*}
\int_{\Omega(t)} |\tilde{u}(x,t) -\tilde{u}^\epsilon (x,t) | \ dx \leq & \ K \Bigl[\left(|\Omega(t)| + Qt \right)TV(\tilde{u}_0)t\|f-f^\epsilon-(f-f^\epsilon)(0)\|_{L^\infty}\Bigr]^\hf\\
\leq & \ K \Bigl[\left(|\supp (\tilde{u}_0)| + 3Qt \right)TV(u_0)t\|f-f^\epsilon-(f-f^\epsilon)(0)\|_{L^\infty}\Bigr]^\hf \\
< & \ K \Bigl[ \left( |\supp (\tilde{u}_0)| + 3Qt \right) TV(u_0)t \Bigr]^\hf \epsilon^{\unitfrac{3}{2}}
\end{align*}
where $K$ is an absolute constant and $\Omega(t)$ is the maximal support of $\tilde{u}(t)$ and $\tilde{u}^\epsilon(t)$, and $Q = \|f\|_\Lip + a_\epsilon M$ . The same can be done for $\tilde{v}$. We extend the result to $u_0,v_0 \in BV(\R)$ by choosing the support of $\tilde{u}_0$ (and of $\tilde{v}_0$) to be $\big[-\frac{1}{2\epsilon}, \frac{1}{2\epsilon}\big]$ in order to get 
\begin{align}\label{eq:epsilonestimate}
\int_{\Omega(t)} |\tilde{u}(x,t) -\tilde{u}^\epsilon (x,t) | \ dx < C(t) \epsilon,
\end{align}
where $C(t):= K \left[1 + (3Qt\epsilon)^\hf \right](TV(u_0)t)^\hf$, from the estimate above. By choosing the (smooth) approximations to have the above support, the interval for which \eqref{eq:initial_approx} holds has to be slightly smaller. We can choose it to be $\Omega = \big[-\frac{1}{2\epsilon} +\epsilon, \frac{1}{2\epsilon}-\epsilon \big]$. 

Assume that \eqref{eq:init_lessthaneps} holds for $u_0,v_0 \in BV(\R)$. Then, by the triangle inequality, \eqref{eq:epsilonestimate} and the $L^1$ contraction property of \eqref{eq:cons_law},
\begin{align*}
\left| \int_{x_1}^{x_2} u(x,t)-v(x,t) \ dx \right| \leq& \left| \int_{x_1}^{x_2} \tilde{u}^\epsilon(x,t)-\tilde{v}^\epsilon(x,t) \ dx \right| \\
& + \int_{x_1}^{x_2} \left| \tilde{u}(x,t)-\tilde{u}^\epsilon(x,t)\right| dx  + \int_{x_1}^{x_2} \left| \tilde{v}(x,t)-\tilde{v}^\epsilon(x,t)\right| dx \\
& + \int_{x_1}^{x_2} \left| \tilde{u}(x,t)-u(x,t) \right| dx  + \int_{x_1}^{x_2} \left| \tilde{v}(x,t)-v(x,t)\right| dx \\
< & \ c + 2C(t)\epsilon + \|\tilde{u}(t)-u(t)\|_{L^1([x_1,x_2])} + \|\tilde{v}(t)-v(t)\|_{L^1([x_1,x_2])} \\
\leq & \ c + 2C(t)\epsilon + \|\tilde{u}_0-u_0\|_{L^1([x_1-Qt,x_2+Qt])} + \|\tilde{v}_0-v_0\|_{L^1([x_1-Qt,x_2+Qt])} \\
\leq & \ c + 2C(t)\epsilon +  2\epsilon ,
\end{align*}
for any $x_1, x_2 \in \big[-\frac{1}{2\epsilon}+\epsilon+Qt, \frac{1}{2\epsilon} -\epsilon -Qt \big]$, as the theorem holds for $f^\epsilon$ twice continuously differentiable. Letting $\epsilon \to 0$ now yields the result.
\end{proof}
\begin{remark}
As the main result in this paper relies on $f$ being convex, we simply assumed convexity in Lemma \ref{lem:lessthaneps} in order to obtain the piecewise smoothness needed to apply Theorem \ref{thrm:oleinik}. Jennings has shown that piecewise smooth solutions of \eqref{eq:cons_law} do exist for a certain class of $u_0$ and non-convex $f$, see \cite{Jen79}. By an appropriate approximation of $u_0$ and $f$ by functions in this class, Lemma \ref{lem:lessthaneps} should be extendible to non-convex $f$.
\end{remark}

We are now ready to prove that the convergence rate of front tracking approximations is $O(\Dx^2)$ when measured in $W_1$.

\begin{theorem}\label{thrm:wass1}
Assume that $u_0 \in BV(\R)$ is of compact support and satisfies \eqref{eq:OSLC}. Let $f$ be twice continuously differentiable and convex. Furthermore, let $u^{\delta, \Dx}$ be the front tracking solution of \eqref{eq:ft} with initial data \eqref{eq:ftinitial} and flux \eqref{eq:ftflux} such that $\delta = O(\Dx)$. Then
\begin{equation}\label{eq:w1rate}
W_1\left(u(t),u^{\delta,\Dx}(t)\right) \leq \tilde{C}(t) \Dx^2,
\end{equation}
where $u$ is the entropy solution of \eqref{eq:cons_law}. The constant $\tilde{C}(t)$ is defined in \eqref{eq:tildec}.
\end{theorem}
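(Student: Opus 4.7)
My approach is to apply the stability estimate of Proposition~\ref{lem:w1stability} with $v=u^{\delta,\Dx}$ to obtain
\[
W_1\bigl(u(t), u^{\delta,\Dx}(t)\bigr) \leq C(t)\Bigl[\, W_1(u_0, u_0^\Dx) + \int_0^t \|(f-f^\delta)(u^{\delta,\Dx}(s))\|_{L^1(\R)}\,ds\,\Bigr],
\]
then show that each term on the right is $O(\Dx^2)$ and resolve the failure of the $\Lip^+$ hypothesis for $u^{\delta,\Dx}$ using Lemma~\ref{lem:lessthaneps}.

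For the initial $W_1$-error, I would use the representation $W_1(u_0,u_0^\Dx) = \int_\R |U_0-U_0^\Dx|\,dx$. Because the cell averaging \eqref{eq:ftinitial} preserves cumulative mass, $U_0(x_\iphf) = U_0^\Dx(x_\iphf)$ at every grid point, and on each cell the elementary bound $|U_0 - U_0^\Dx|(x) \leq \Dx \cdot TV\bigl(u_0|_{\cell_i}\bigr)$ holds for $x \in \cell_i$. Integrating each cell contributes at most $\Dx^2 \cdot TV(u_0|_{\cell_i})$; summing gives $W_1(u_0,u_0^\Dx) \leq \Dx^2\,TV(u_0)$. For the flux error, piecewise linear interpolation of the $C^2$ function $f$ on a mesh of size $\delta = O(\Dx)$ yields $\|f-f^\delta\|_{L^\infty([-M,M])} \leq \tfrac{1}{8}\|f''\|_\infty \delta^2 = O(\Dx^2)$; combined with the finite-speed-of-propagation bound on $\supp(u^{\delta,\Dx}(s))$ uniformly on $[0,t]$, this gives $\|(f-f^\delta)(u^{\delta,\Dx}(s))\|_{L^1} = O(\Dx^2)$.

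The main obstacle is that Proposition~\ref{lem:w1stability} requires both $u$ and $u^{\delta,\Dx}$ to satisfy \eqref{eq:OSLC}: while $u$ inherits this from $u_0$, the front-tracking solution is piecewise constant with positive jumps at rarefaction fronts and hence violates it. I would address this using Lemma~\ref{lem:lessthaneps}, most naturally by introducing an intermediate $\tilde u$ that solves \eqref{eq:cons_law} with flux $f^\delta$ and initial data $u_0$. This $\tilde u$ has the same initial data as $u$ but the same flux as $u^{\delta,\Dx}$; since $f^\delta$ is convex and $u_0$ is $\Lip^+$ bounded, $\tilde u$ satisfies a suitable OSLC-type bound, so Proposition~\ref{lem:w1stability} applied to $(u,\tilde u)$ contributes only the flux error, which is $O(\Dx^2)$. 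The remaining comparison of $\tilde u$ with $u^{\delta,\Dx}$ (same flux, initial data differing by cell averaging) is exactly the setting of Lemma~\ref{lem:lessthaneps}. The hardest step is to preserve the quadratic rate here: Lemma~\ref{lem:lessthaneps} transports the quantity $\sup_{y_1,y_2}|\int_{y_1}^{y_2}(u_0-u_0^\Dx)\,dy| = \text{osc}(U_0-U_0^\Dx) = O(\Dx)$ in time, giving a uniform bound on $\|\tilde U(t)-U^{\delta,\Dx}(t)\|_{L^\infty} = O(\Dx)$, which would generically yield only $O(\Dx)$ in $W_1$. To recover the extra factor of $\Dx$, I would exploit that the $L^\infty$-scale discrepancy of $U_0-U_0^\Dx$ is confined, by the cellwise vanishing at each $x_\iphf$, to a region of total length $O(\Dx)$, and argue that this localization is inherited by $\tilde U(t)-U^{\delta,\Dx}(t)$ under the $f^\delta$-dynamics. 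A triangle inequality and a final Gr\"onwall argument then deliver $W_1(u(t),u^{\delta,\Dx}(t)) \leq \tilde C(t)\Dx^2$.
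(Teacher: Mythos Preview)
Your overall architecture---triangle inequality through an intermediate solution, Proposition~\ref{lem:w1stability} for one piece, Lemma~\ref{lem:lessthaneps} for the other---matches the paper's. The estimates $W_1(u_0,u_0^\Dx)\leq \Dx^2\,TV(u_0)$ and $\|f-f^\delta\|_{L^\infty}=O(\delta^2)$ are correct. But your choice of intermediate function $\tilde u$ (flux $f^\delta$, initial data $u_0$) leaves a genuine gap in the second piece, and your proposed fix does not close it.

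The problem is exactly the one you flag: for $u_0\in BV$ that is merely $\Lip^+$ bounded, decreasing jumps are allowed, and on a cell containing a jump of size $J=O(1)$ one has $\sup_x|U_0(x)-U_0^\Dx(x)|\sim J\Dx/4=O(\Dx)$, not $O(\Dx^2)$. Lemma~\ref{lem:lessthaneps} then transports only the bound $\|\tilde U(t)-U^{\delta,\Dx}(t)\|_{L^\infty}=O(\Dx)$. Your hope that the region where this $L^\infty$-scale discrepancy lives stays of length $O(\Dx)$ under the $f^\delta$-dynamics is not something Lemma~\ref{lem:lessthaneps} (or any result in the paper) gives you; that lemma controls only the supremum over \emph{all} intervals, with no localization. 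Without such a localization argument you are stuck at $W_1=O(\Dx)$ for this piece.

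The paper sidesteps this by a different intermediate: it takes $u^{\delta,\sigma}$ with flux $f^\delta$ and initial data $u_0^\sigma$ obtained from $u_0^\Dx$ by replacing each \emph{increasing} jump $u_i<u_{i+1}$ by a linear ramp of width $\Dx$. Then (i) $u_0^\sigma$ is $\Lip^+$ bounded with the same constant as $u_0$, so Proposition~\ref{lem:w1stability} applies to the pair $(u,u^{\delta,\sigma})$ and yields $O(\Dx^2)$; and (ii) $u_0^\sigma-u_0^\Dx$ is nonzero only at increasing jumps, whose height is at most $C\Dx$ by the $\Lip^+$ bound on $u_0$, so that $\sup_{y_1,y_2}\bigl|\int_{y_1}^{y_2}(u_0^\sigma-u_0^\Dx)\bigr|\leq \tfrac{1}{8}C\Dx^2$. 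Lemma~\ref{lem:lessthaneps} then gives $\|U^{\delta,\sigma}(t)-U^{\delta,\Dx}(t)\|_{L^\infty}\leq \tfrac{1}{8}C\Dx^2$, and multiplying by the support size $K(t)$ yields $W_1=O(\Dx^2)$. The point is that the intermediate is chosen so that the $L^\infty$ oscillation of the primitive difference is \emph{already} $O(\Dx^2)$ at $t=0$; no localization-in-time argument is needed.
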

\begin{proof} First observe that with $u^\Dx_0$ as in \eqref{eq:ftinitial}, $ u^\Dx_0-u_0$ satisfies \eqref{eq:wassercondition}, and the $W_1$-distance is well-defined. Also, by a simple calculation using \eqref{eq:wassalternative},
\begin{align}\label{eq:initestimate}
W_1\big(u_0, u_0^{\Dx} \big) \leq \Dx^2 TV(u_0).
\end{align}
In order to use Proposition \ref{lem:w1stability}, we fix an intermediate solution $u^{\delta,\sigma}$ by solving \eqref{eq:ft} replacing $u_0^\Dx$ in \eqref{eq:ftinitial} with the slightly regularized initial data
\begin{align*}
u_0^{\sigma}(x) = 
\begin{cases}
u_i + \frac{u_{i+1}-u_i}{\Dx}(x-i\Dx), \quad x \in [i\Dx,(i+1)\Dx], &\quad \textrm{when} \ u_i < u_{i+1}, \\
u_0^\Dx(x), &\quad \textrm{otherwise}.
\end{cases}
\end{align*}
As $u_0$ satisfies \eqref{eq:OSLC}, it is not hard to see that $u_0^{\sigma}$ also will.
By the triangle inequality,
\begin{align*}
W_1\left(u(t),u^{\delta,\Dx}(t)\right) \leq W_1\left(u(t),u^{\delta,\sigma}(t)\right) + W_1\left(u^{\delta,\sigma}(t),u^{\delta,\Dx}(t)\right) = I + II.
\end{align*}
Applying Proposition \ref{lem:w1stability} to $I$, we get
\begin{align*}
I=W_1\left(u(t),u^{\delta,\sigma}(t)\right) \leq C(t) \left[W_1\big( u_0, u_0^{\sigma} \big) + \int_0^t \|(f-f^\delta)(u^{\delta,\sigma}(s))\|_{L^1(\R)} ds \right],
\end{align*}
where $W_1\big(u_0, u_0^{\sigma} \big) \leq \Dx^2 TV(u_0)$ follows from \eqref{eq:initestimate}. Furthermore,
\begin{align*}
 \int_0^t \|(f-f^\delta)(u^{\delta,\sigma}(s))\|_{L^1(\R)} ds \leq & \ t \max_{s \in [0,t]} \left|\supp \big(u^{\delta,\sigma}(s)\big)\right|\sup_{u \in [-M,M]} \left|(f-f^\delta)(u) \right| \\
\leq & \ \left[ t \max_{s \in [0,t]} \left|\supp \big(u^{\delta,\sigma}(s)\big)\right| \sup_{u \in [-M-\delta,M+\delta]} f''(u)\right] \delta^2,
\end{align*}
where the first inequality follows from $(f-f^\delta)(0)=0$ and the compact support of $u^{\delta,\sigma}$ and the second inequality by \eqref{eq:ftflux} and a Taylor expansion of $f$ around $j\delta$ where $u \in [j\delta,(j+1)\delta]$. The number $M$ is the constant such that $|u_0| \leq M$ (which is finite since $u_0 \in BV(\R)$). It follows from \eqref{eq:ftinitial} that also $|u_0^\sigma|,|u_0^\Dx| \leq M$. Hence, $|u^{\delta,\Dx}(t)|,|u^{\delta,\sigma}(t)| \leq M$. Thus
\begin{align}
\max_{s \in [0,t]} \left| \supp \big(u^{\delta,\sigma}(s)\big) \right| & \leq |\supp(u_0)|+2\Dx + \| f^\delta \|_{\Lip([-M,M])}t \nonumber \\
& \leq |\supp(u_0)|+2\Dx + \| f \|_{\Lip([-M-\delta,M+\delta])}t =: K(t) \label{eq:K(t)},
\end{align}
and similarly for $u^{\delta,\Dx}$.

As $u^{\delta,\sigma}(t)$ and $u^{\delta,\Dx}(t)$ are of compact support and $\int_\R u^{\delta,\sigma}(t)-u^{\delta,\Dx}(t) \ dx = 0$, we estimate $II$ as follows,
\begin{align*}
II=W_1\left(u^{\delta,\sigma}(t),u^{\delta,\Dx}(t)\right) & = \int_\R \left|\int_{-\infty}^x \left(u^{\delta,\sigma}-u^{\delta,\Dx}\right)(y,t) dy \right| dx \\
& \leq   \max \left\{|\supp (u^{\delta,\sigma})|, |\supp (u^{\delta,\Dx})| \right\} \sup_x \left|\int_{\gamma}^x \left(u^{\delta,\sigma}-u^{\delta,\Dx}\right)(y,t) dy \right| \\
& \leq   K(t) \sup_x \left|\int_{\gamma}^x \left(u^{\delta,\sigma}-u^{\delta,\Dx}\right)(y,t) dy \right|,
\end{align*}
where $\gamma$ is the smallest value in the support of $u^{\delta,\sigma}(t)-u^{\delta,\Dx}(t)$. 
Also, for any $x_1,x_2 \in \R$,
\begin{align*}
\left|\int_{x_1}^{x_2} \left(u_0^{\sigma}-u_0^{\Dx}\right)(y,t) dy \right| \leq \frac{1}{8} C \Dx^2, 
\end{align*}
where $C$ comes from \eqref{eq:OSLC}. Thus we can apply Lemma \ref{lem:lessthaneps} to conclude that 
\begin{align*}
\sup_x \left|\int_{\gamma}^x \left(u^{\delta,\sigma}-u^{\delta,\Dx}\right)(y,t) dy \right| \leq \frac{1}{8} C \Dx^2.
\end{align*}
Combining the two estimates $I$ and $II$ gives \eqref{eq:w1rate} with
\begin{align}\label{eq:tildec}
\tilde{C}(t) = \left[ C(t) \left(TV(u_0) + t \lambda^2 K(t) \sup_{u \in [-M,M]} f''(u) \right) + \frac{K(t)}{8} C \right],
\end{align}
where $\lambda = \delta/\Dx$, $C(t)$ is defined in \eqref{eq:C(t)} and $K(t)$ in \eqref{eq:K(t)}.
\end{proof}

\begin{remark}[A different approach to the $\Dx^2$ rate]
\label{rem:hjft}
As mentioned in the introduction, a rate of $\Dx^2$ in the $W_1$-distance can be deduced from a result by Karlsen and Risebro \cite[Remark 2.2]{KR02} for $f \in C^2$ (not necessarily convex) and $u_0 \in C_c^1$. In their paper the focus is on proving the equivalence between entropy solutions of the conservation law \eqref{eq:cons_law} and viscosity solutions of the corresponding Hamilton--Jacobi equation \eqref{eq:HJ} through the relation \eqref{eq:equivalence}. This is done by translating the front tracking method to a method for \eqref{eq:HJ}. As a bonus, the authors find that front tracking approximations to \eqref{eq:HJ} converge at a rate of $\Dx^2$ in the $L^\infty$-norm when $u_0 \in C_c^1$. From \eqref{eq:wassalternative}, it is not hard to see that this translates into a rate of $O(\Dx^2)$ in $W_1$ for \eqref{eq:cons_law},
\begin{equation}\label{eq:2order_HJ}
W_1(u,u^{\delta,\Dx}) \leq C |\Omega(t)| \|U-U^{\delta,\Dx} \|_{L^\infty} =C | \Omega (t) | \Dx^2, 
\end{equation}
where $|\Omega(t)|=|\supp(u(t))\cup \supp(u^{\delta,\Dx}(t))| \leq |\supp (u_0)| + 2\Dx + 2\|f\|_{\Lip([-M-\delta,M+\delta])} t$, and $C$ depends on $\|f''\|_{L^\infty}$, $\|u_0''\|_{L^\infty}$ and the time $t$.

Although the convergence rate \eqref{eq:2order_HJ} is the same as the one we prove in this paper in $W_1$, the approach and the assumptions made differ. The results in this paper rely directly on inequalities involving the $W_1$-metric and does not go via front tracking approximations to solutions of \eqref{eq:HJ}. With this approach we can prove a $\Dx^2$ rate in $W_1$ also in the case of $u_0$ with decreasing jump discontinuities, whereas with the approach in \cite{KR02} the rate will reduce to $\Dx$ for such $u_0$. The drawback is that we have to assume convexity of $f$ which is not required in \cite{KR02}.
\end{remark}

\section{The convergence rate in $W_\infty$}\label{sec:winfty}
In order to prove the $\Dx$ rate in $W_\infty$, we require stability estimates of solutions to \eqref{eq:cons_law} with respect to both the initial data and the flux functions. To obtain these estimates, we will extend the $W_\infty$-contractivity with respect to initial data proved by Carrillo et al. \cite{Caretal2006} to cover the case of the front tracking equation \eqref{eq:ft}. Furthermore, inspired by the proof of the $W_\infty$-contractivity, we will prove a stability estimate with respect to the fluxes.

We will from now on assume that $f(0)=0$ is the minimum of $f$. As in \cite{Caretal2006}, we restrict ourselves to initial data in $\B$, and start by assuming that the support of $u_0$ consists of one connected component. Then, under a certain condition on $f$, we can ensure that the support of the solution to \eqref{eq:cons_law} is connected at any later time $t>0$:
\begin{lemma}\label{lem:connected}
Assume that $f$ is convex and that
\begin{align}\label{eq:f_condition}
\left\| a \right\|_{L^\infty([0,M])} \leq C, \qquad  a(v) := \frac{f'(v)v-f(v)}{v^2},
\end{align}
holds for some constant $C$, where $M=\|u_0\|_{L^\infty(\R)}$. Furthermore, let $u_0 \in \B$ satisfy \eqref{eq:OSLC} and assume that $\supp (u_0)$ consists of one connected component. Then the support of the solution to \eqref{eq:cons_law}, $u(t)$, is connected for any $t>0$.
\end{lemma}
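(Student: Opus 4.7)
The plan is to represent the entropy solution through the Hopf--Lax formula and then characterize the support of $u(\cdot,t)$ in terms of the minimizer therein. Since $f$ is convex with $f(0)=0$ as its minimum and $u_0\in\B$ satisfies $u_0\geq 0$, the primitive $U_0(x)=\int_{-\infty}^x u_0$ is non-decreasing, and by \eqref{eq:laxhopf} the viscosity solution of \eqref{eq:HJ} is
\begin{equation*}
U(x,t)=\min_{y\in\R}\left\{U_0(y)+t\,f^*\bigl(\tfrac{x-y}{t}\bigr)\right\},
\end{equation*}
whose $x$-derivative is the entropy solution $u(x,t)$. Because $U_0$ is non-decreasing and $f^*$ attains its minimum at the origin, the (rightmost) minimizer $y^*(x,t)$ satisfies $y^*(x,t)\leq x$ for every $x$; correspondingly, by the envelope identity $u(x,t)=(f^*)'\bigl((x-y^*(x,t))/t\bigr)$, the vanishing of $u(x,t)$ at a point forces $(x-y^*(x,t))/t$ to lie in the region where $(f^*)'$ vanishes.

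I then argue by contradiction: suppose $\supp u(\cdot,t)$ is disconnected, so it contains an open gap $(c,d)$ with $u(\cdot,t)\equiv 0$ there. Then $U(\cdot,t)$ is constant on $[c,d]$, and in the non-degenerate case where $f^*$ is strictly convex near $0$, the preceding observation forces $y^*(x,t)=x$ for every $x\in(c,d)$. Substituting $y^*=x$ back into the Hopf--Lax formula yields $U_0(x)=U(x,t)=\text{const}$ on $(c,d)$, hence $u_0\equiv 0$ a.e.\ on the subinterval $(c,d)$ of $[a_0,b_0]$. Since the essential support of an $L^\infty$ function cannot include an open subinterval on which the function vanishes almost everywhere, this contradicts the assumption that $\supp u_0=[a_0,b_0]$ consists of a single connected component.

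The delicate case is when $f'$ has a flat piece at the minimum, so that $(f^*)'_+(0)=u_+>0$, and condition \eqref{eq:f_condition} is exactly the sufficient hypothesis that makes the argument go through. Writing $a(v)=\bigl(f(v)/v\bigr)'$, the assumption means the shock speed $v\mapsto f(v)/v$ is Lipschitz on $[0,M]$. I would use this together with the propagated one-sided Lipschitz control \eqref{eq:OSLC} on $u(\cdot,t)$ to compare the right-moving shock at the right edge of the hypothesized left sub-support (speed $f(u_-)/u_-$, with $u_-$ the left state of the shock) against the leading edge of the rarefaction at the left edge of the right sub-support (characteristic speed $f'(0)=0$). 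The Lipschitz control rules out the two boundaries travelling at a common positive speed, and it prevents a degenerate coincidence where both edges sit in the flat region of $f$ unless $u_0$ itself vanishes on a subinterval of $[a_0,b_0]$. The main obstacle is precisely this degenerate situation: in the flat regime, the Hopf--Lax minimizer is no longer uniquely related to the value of $u$, and extracting the correct entropy value from the distributional derivative of $U$ demands care — this is where the Lipschitz bound on the shock speed $f(v)/v$ provided by \eqref{eq:f_condition} becomes essential.
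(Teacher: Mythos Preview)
Your route via the Hopf--Lax formula and contradiction is entirely different from the paper's. The paper rewrites \eqref{eq:cons_law} as the transport equation $\partial_t v + \partial_x(bv)=0$ with velocity $b(x,t)=f(u(x,t))/u(x,t)$; since $\partial_x b = a(u)\,\partial_x u$ with $a$ as in \eqref{eq:f_condition}, boundedness of $a$ together with the $\Lip^+$ bound on $u(t)$ makes $b(\cdot,t)$ one-sided Lipschitz. By the Poupaud--Rascle theory the generalized forward characteristics $X(t)$ are then unique and depend continuously on the initial point, so the unique measure solution is $u(t)=X(t)_\# u_0$, and connected support is preserved under the continuous map $X(t)$. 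In that argument both \eqref{eq:f_condition} and \eqref{eq:OSLC} are genuinely used.

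Your non-degenerate case is essentially fine, but the ``delicate case'' paragraph is not a proof: you describe a shock-versus-rarefaction picture and assert that \eqref{eq:f_condition} ``rules out'' the coincidence, yet you never carry the computation through. In fact the Hopf--Lax argument can be closed uniformly without invoking \eqref{eq:f_condition} or \eqref{eq:OSLC}. Set $s_0:=f'_+(0)\geq 0$, so that $f^*\equiv 0$ on $(-\infty,s_0]$ and is strictly increasing on $(s_0,\infty)$; for any $x_0$ in the putative gap $(c,d)$ and any minimizer $y^*(x_0)$, comparing $U(x_0-h,t)=\alpha$ with the candidate $y=y^*(x_0)$ forces $f^*\bigl(p_0-h/t\bigr)\geq f^*(p_0)$ for small $h>0$, where $p_0=(x_0-y^*(x_0))/t$. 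Strict monotonicity of $f^*$ past $s_0$ then gives $p_0\leq s_0$, hence $U(x_0,t)=U_0(y^*(x_0))\geq U_0(x_0-s_0 t)\geq U(x_0,t)$, so $U_0(x_0-s_0 t)=\alpha\in(0,1)$ for all $x_0\in(c,d)$, contradicting connectedness of $\supp u_0$. Thus your instinct that \eqref{eq:f_condition} is the missing ingredient is misplaced: that hypothesis drives the paper's transport proof, not the Hopf--Lax one.
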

\begin{proof}
Consider the transport equation
\begin{align}\label{eq:transport}
\partial_t v + \partial_x\left(b v \right) = 0, \quad u(0)=u_0, \qquad b(x,t) = \frac{f\left(u(x,t)\right)}{u(x,t)},
\end{align}
where $u(t)$ is the entropy solution to \eqref{eq:cons_law}, with its associated characteristics equation
\begin{align}\label{eq:genchar}
\frac{dX(t)}{dt}=b(x,t), \quad X(0)=x. 
\end{align}
If $b(x,t)$ is $\Lip^+$ bounded with respect to $x$ for all $t>0$, the generalized characteristics of \label{eq:genchar} are unique. It follows that the transport equation \eqref{eq:transport} has a unique measure solution $v(t)=X(t)\# u_0$, the pushforward of $u_0$ by the map $X(t)$, see Poupaud and Rascle \cite{Poup97}. Furthermore, the map $X(x,t)$ is continuous on $\R_+ \times \R$. Thus, if the support of $u_0$ is connected, the support of $v(t) = X(t)\# u_0$ has to be connected as well. 

As $u_0$ is $\Lip^+$ bounded and $f$ is convex, $u(t)$ is also $\Lip^+$ bounded. Then, under the condition \eqref{eq:f_condition}, one can check that $b(t)$ is indeed $\Lip^+$ bounded. (Note that $a \geq 0$ in \eqref{eq:f_condition} as $f(0)=0$ is the minimum of $f$.) It follows that there is a unique measure solution $v(t)=X(t) \# u_0$ to \eqref{eq:transport} with connected support. 

The entropy solution $u(t)$ of \eqref{eq:cons_law} also solves the transport equation \eqref{eq:transport}. Hence, as the measure solution of \eqref{eq:transport} is unique, the entropy solution has to satisfy $u(t)= X(t)_\# u_0$. Thus, the support of $u(t)$ has to be connected. 
\end{proof}
\noindent Note that the above condition \eqref{eq:f_condition} holds for $f$ twice continuously differentiable.

The above lemma makes it possible to find an expression for the inverse of the primitive of $u(t)$ such that we can utilize the interpretation \eqref{eq:winfty_onedim} of $W_\infty$. In the case of an uniformly convex flux function, Carrillo et al. \cite{Caretal2006} make use of the Hopf--Lax formula \eqref{eq:laxhopf} to explicitly express the primitive $U$ of $u$ and then find the inverse. We will do the same, but under the slightly different assumption that $f$ satisfies \eqref{eq:f_condition} and is convex and superlinear to include fluxes of the form \eqref{eq:ftflux}. The resulting explicit expression for the inverse is stated in the lemma below.


\begin{lemma}\label{lem:priminv}
Let $u_0 \in \B$ be such that $\supp(u_0)$ consists of one connected component and let $f$ satisfy \eqref{eq:f_condition}. Then the inverse of $U(t) =\int^x u(t)$, where $u(t)$ solves \eqref{eq:cons_law} is
\begin{align*}
U^{-1}(\gamma,t) = \max_{0\leq \omega \leq \gamma} \left\{t \tilde{f}\left(\frac{\gamma-\omega}{t} \right) + U_0^{-1}(\omega)\right\},
\end{align*}
where $\tilde{f}$ is the inverse of $f^*$, see \eqref{eq:legendre}, restricted to $[0,\infty)$.
\end{lemma}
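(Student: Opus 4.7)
The approach is to obtain the formula from the Hopf--Lax representation of the primitive $U$ (viewed as the viscosity solution of the associated Hamilton--Jacobi equation), and then invert it explicitly, exactly mirroring what Carrillo et al.\ did in the uniformly convex case but under the weaker assumption \eqref{eq:f_condition}. By Lemma \ref{lem:connected} the support of $u(t)$ is connected, so $U(\cdot,t)$ is continuous, non-decreasing from $0$ to $1$, and strictly increasing on its support; thus, for every $\gamma \in (0,1)$, the pseudo-inverse $U^{-1}(\gamma,t)$ is the unique $x$ with $U(x,t)=\gamma$. Similarly, the connectedness of $\supp(u_0)$ makes $U_0^{-1}$ a genuine continuous inverse on $(0,1)$.

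The plan is to start from the Hopf--Lax formula
\begin{equation*}
U(x,t) = \min_{y \in \R}\left\{ t f^*\!\left(\tfrac{x-y}{t}\right) + U_0(y) \right\},
\end{equation*}
which holds because $U$ is the (Lipschitz) viscosity solution of \eqref{eq:HJ}. For a flux that is merely convex and not globally superlinear (e.g.\ the piecewise linear $f^\delta$ of \eqref{eq:ftflux}), this is justified by first extending $f$ outside $[-M,M]$ to a convex superlinear function --- which is legitimate since the entropy/viscosity solution depends only on the restriction of $f$ to the range of $u$ --- and then noting that $f^*$ restricted to $[0,\infty)$ is still non-decreasing with $f^*(0)=0$, so $\tilde f$ is well defined (possibly with $\tilde f(s)=+\infty$ beyond $\sup\{p : f^*(p)<\infty\}$, which causes no problem in the maximum).

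Next, fix $\gamma \in (0,1)$ and set $x=U^{-1}(\gamma,t)$. For any $\omega \in [0,\gamma]$, let $y_\omega = U_0^{-1}(\omega)$, so that $U_0(y_\omega)=\omega$. Since $f$ has its minimum at $0$, $(f^*)'(0)=0$ and the Hopf--Lax minimizer satisfies $(f^*)'((x-y^*)/t)=u_0(y^*)\geq 0$, forcing $y^*\leq x$; the same monotone rearrangement argument gives $y_\omega \leq x$ for every $\omega \leq \gamma$. The Hopf--Lax inequality yields
\begin{equation*}
\gamma \;=\; U(x,t) \;\leq\; t f^*\!\left(\tfrac{x-y_\omega}{t}\right) + \omega,
\end{equation*}
so $f^*((x-y_\omega)/t) \geq (\gamma-\omega)/t \geq 0$, and applying the increasing inverse $\tilde f$ on $[0,\infty)$ gives
\begin{equation*}
x \;\geq\; U_0^{-1}(\omega) + t\,\tilde f\!\left(\tfrac{\gamma-\omega}{t}\right),
\qquad \omega \in [0,\gamma].
\end{equation*}
Taking the supremum over $\omega$ produces one direction of the claim. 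For the reverse, choose $\omega^*=U_0(y^*) \in [0,\gamma]$, where $y^*$ is a Hopf--Lax minimizer at $(x,t)$; then equality holds in the chain above, so the maximum is attained and equals $x$.

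The main obstacle is the flux regularity: the condition \eqref{eq:f_condition} is genuinely weaker than the uniform convexity used in \cite{Caretal2006}, so one must check that $f^*$ (possibly after the superlinear extension) is strictly monotone on $[0,\infty)$ so that $\tilde f$ is single-valued, and that the Hopf--Lax formula survives the extension. A secondary technical point is the identification $y^*=U_0^{-1}(\omega^*)$, which uses that $U_0$ is strictly increasing on $\supp(u_0)$ (a consequence of the connectedness hypothesis) so that the pseudo-inverse is a true two-sided inverse; everything else is a routine rearrangement of the Hopf--Lax identity.
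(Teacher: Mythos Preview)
Your approach is correct and coincides with the paper's: both start from the Hopf--Lax representation of $U$, use Lemma~\ref{lem:connected} to ensure $U(\cdot,t)$ and $U_0$ are strictly increasing on their supports, and then carry out the explicit inversion of the Hopf--Lax formula. The paper simply cites \cite[Lemma~2.3]{Caretal2006} for that last step, whereas you spell it out. Your remark about extending $f$ to a convex superlinear function outside $[-M,M]$ is a useful clarification; the paper tacitly adds superlinearity as a standing assumption in the paragraph preceding the lemma rather than in the statement itself.

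One step in your write-up is under-justified. The inequality $y_\omega:=U_0^{-1}(\omega)\leq x$ for every $\omega\in[0,\gamma]$ does \emph{not} follow from ``the same monotone rearrangement argument'' you used for the minimizer $y^*$: that argument relied on the first-order optimality condition $(f^*)'((x-y^*)/t)=u_0(y^*)$, which is unavailable for a generic $y_\omega$. The inequality is nevertheless true, for a different reason: since $f\geq f(0)=0$ and $u\geq 0$, one has $\partial_t U=-f(u)\leq 0$, hence $U(\cdot,t)\leq U_0$, and inverting gives $U^{-1}(\gamma,t)\geq U_0^{-1}(\gamma)\geq U_0^{-1}(\omega)$. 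Alternatively, you can bypass this issue entirely: set $z:=U_0^{-1}(\omega)+t\,\tilde f\!\bigl((\gamma-\omega)/t\bigr)$ and test Hopf--Lax at $y=U_0^{-1}(\omega)$ to obtain $U(z,t)\leq t\cdot\tfrac{\gamma-\omega}{t}+\omega=\gamma$, whence $z\leq x$ directly. Either fix completes your argument.
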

\begin{proof}
As $u_0 \in \B$, $u(t) \in \B$, and it follows that $U(t)= \int^x u(t) $ is Lipschitz continuous and bounded. Then, as $f$ is convex and superlinear, $U(t)$ can be expressed with the Hopf--Lax formula \eqref{eq:laxhopf}. 

As $\supp (u_0)$ is connected, $U_0$ is strictly increasing from 0 to 1 on a finite interval and we can find its inverse. Furthermore, we know from Lemma \ref{lem:connected} that the support of $u(t)$ is connected. It follows that $U(t)$ is strictly increasing. Hence, its inverse exists and can be implicitly defined by the Hopf--Lax formula.

Note that as $f(0)=0$ is the minimum, $f^*(0)=0$. As $f$ is convex, it is (strictly) increasing on $[0,\infty)$. Then $f^*(p)$ has to be increasing for $p \in [0,\infty)$. The rest of the proof is exactly like the proof of \cite[Lemma 2.3]{Caretal2006}. 
\end{proof}

Next follows a contraction result in the $W_\infty$-distance with respect to the initial data. The result proposed here is \cite[Thm. 2.4, Thm. 2.5]{Caretal2006} adjusted to include the front tracking flux \eqref{eq:ftflux}. As we initially assume that $f$ is only convex, we do not need the approximation procedure of the flux in $C^1$ which is needed in the proofs of \cite[Thm. 2.4, Thm. 2.5]{Caretal2006} to make the contraction estimate valid for convex fluxes. We restate the main details of the proof here for completeness.

\begin{proposition}\label{lem:winfty_init}
Let $u_0,v_0 \in \B\cap BV(\R)$ and let $f$ satisfy \eqref{eq:f_condition} and be convex and superlinear. Then the respective entropy solutions $u(t)$ and $v(t)$ of \eqref{eq:cons_law} satisfy
\begin{align}\label{eq:winfty_init}
W_\infty \big(u(t),v(t)\big) \leq W_\infty (u_0,v_0).
\end{align}
\end{proposition}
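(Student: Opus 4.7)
The plan is to mimic the argument of Carrillo et al.\ \cite{Caretal2006} but using the slightly more general representation for the inverse primitive provided by Lemma \ref{lem:priminv}. The one-dimensional characterization \eqref{eq:winfty_onedim} reduces the statement to
\begin{equation*}
\|U^{-1}(\cdot,t)-V^{-1}(\cdot,t)\|_{L^\infty([0,1])} \leq \|U_0^{-1}-V_0^{-1}\|_{L^\infty([0,1])},
\end{equation*}
so the whole effort goes into controlling the inverse primitives pointwise.

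First I would treat the case in which both $\supp(u_0)$ and $\supp(v_0)$ are connected. In this setting Lemma \ref{lem:priminv} applies and yields
\begin{equation*}
U^{-1}(\gamma,t) = \max_{0\leq \omega \leq \gamma}\left\{t\tilde f\!\left(\frac{\gamma-\omega}{t}\right)+U_0^{-1}(\omega)\right\},
\end{equation*}
and the analogous formula for $V^{-1}$. Fix $\gamma\in[0,1]$ and let $\omega^\ast\in[0,\gamma]$ be a maximizer for $U^{-1}(\gamma,t)$ (existence follows from continuity of $U_0^{-1}$ together with superlinearity of $f$, which ensures the bracket tends to $-\infty$ as the argument grows). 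Since $V^{-1}(\gamma,t)$ is a supremum, using $\omega^\ast$ as a test point gives
\begin{equation*}
V^{-1}(\gamma,t) \geq t\tilde f\!\left(\frac{\gamma-\omega^\ast}{t}\right)+V_0^{-1}(\omega^\ast).
\end{equation*}
Subtracting the two expressions the $t\tilde f$ terms cancel, so
\begin{equation*}
U^{-1}(\gamma,t)-V^{-1}(\gamma,t) \leq U_0^{-1}(\omega^\ast)-V_0^{-1}(\omega^\ast)\leq \|U_0^{-1}-V_0^{-1}\|_{L^\infty([0,1])}=W_\infty(u_0,v_0).
\end{equation*}
Swapping the roles of $u$ and $v$ yields the reverse bound, and taking the supremum over $\gamma$ gives \eqref{eq:winfty_init} for connected supports.

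To extend to general $u_0,v_0\in\B\cap BV(\R)$, whose supports may be disconnected, I would use an approximation argument. Construct sequences $u_0^n,v_0^n\in\B\cap BV(\R)$ with connected supports by filling in the gaps with thin bridges of mass (for instance, replacing $u_0$ by $(1-\tfrac1n)u_0+\tfrac1n \rho_n$ on a small neighbourhood of $\supp(u_0)$, where $\rho_n$ is a connecting density) in such a way that $W_\infty(u_0^n,u_0)\to 0$ and $W_\infty(v_0^n,v_0)\to 0$. The contraction just proved applies to $u_0^n,v_0^n$. Passing to the limit uses $L^1$-stability of entropy solutions (hence weak-$*$ convergence of the associated measures), combined with the lower semicontinuity of $W_\infty$ with respect to weak convergence. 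Both the triangle inequality for $W_\infty$ and the limit preserve the inequality, giving the contraction for the original data.

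The main obstacle is the approximation step: the $W_\infty$ distance is notoriously rigid (it is \emph{not} lower semicontinuous for weak-$\ast$ convergence in general, unlike $W_p$ for $p<\infty$), so some care is needed to construct $u_0^n\to u_0$ with $W_\infty(u_0^n,u_0)\to 0$ rather than only weak convergence, and to ensure that the contraction at each $n$ transfers to the limit via the triangle inequality in $W_\infty$. The rest — the pointwise max-formula comparison — is essentially a transcription of the Hopf--Lax argument and is routine once the explicit representation of Lemma \ref{lem:priminv} is in hand.
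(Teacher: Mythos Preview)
Your treatment of the connected-support case is essentially the paper's argument and is fine. The gap is in the extension step. You propose to build connected-support approximations $u_0^n$ with $W_\infty(u_0^n,u_0)\to 0$, but this is impossible whenever $\supp(u_0)$ is genuinely disconnected: a gap of length $\ell$ in $\supp(u_0)$ produces a jump of size $\ell$ in the pseudo-inverse $U_0^{-1}$, while any connected-support $u_0^n$ has a continuous pseudo-inverse, so $\|U_0^{-1}-(U_0^n)^{-1}\|_{L^\infty([0,1])}\geq \ell/2$ for every $n$. Your bridge construction therefore does not give $W_\infty(u_0^n,u_0)\to 0$, and the triangle-inequality route you outline cannot close. (Incidentally, $W_\infty$ \emph{is} lower semicontinuous for weak convergence --- it is the increasing limit of the l.s.c.\ $W_p$ --- so your concern about l.s.c.\ is misplaced; the real obstruction is on the approximation side.)

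The paper circumvents this by never asking for $W_\infty$-closeness of the approximants to the original data separately. Instead it takes $u_0^n\to u_0$ and $v_0^n\to v_0$ only in $L^1$, chosen (via \cite[Thm.~5.5]{Caretal2006}) so that the \emph{pair} satisfies $W_\infty(u_0^n,v_0^n)\leq W_\infty(u_0,v_0)+\epsilon$. The connected-support contraction then gives $W_p(u^n(t),v^n(t))\leq W_\infty(u_0,v_0)+\epsilon$ for every finite $p$; $L^1$-contraction of the flow plus uniformly bounded supports yields convergence of $p$th moments, hence $W_p$-convergence, so one may pass to the limit $n\to\infty$ in $W_p$ and only afterwards send $p\to\infty$. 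The detour through finite $p$ is precisely what lets one work with $L^1$ approximation of the data rather than the unattainable $W_\infty$ approximation you are attempting.
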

\begin{proof}
This proof is very similar to the ones of \cite[Thm. 2.4, Thm. 2.5]{Caretal2006}.

Due to the assumptions on $u_0$ and $f$, the primitive of $u$ can be found by the Hopf--Lax formula \eqref{eq:laxhopf}. We start by assuming that $\supp(u_0)$ consists of one connected component. Then Lemma \ref{lem:priminv} holds, and we can look at the difference between explicit expressions of the inverses,
\begin{align}\label{eq:explicit}
 U^{-1}(\gamma,t)-V^{-1}(\gamma,t)
= \max_{0\leq \omega \leq \gamma} \left\{t \tilde{f}\left(\frac{\gamma-\omega}{t} \right) + U_0^{-1}(\omega)\right\} - \max_{0\leq \omega \leq \gamma} \left\{t \tilde{f}\left(\frac{\gamma-\omega}{t} \right) + V_0^{-1}(\omega)\right\}.
\end{align}
Assume that $\omega_m$ realizes the maximum in the first expression. Then
\begin{align*}
 U^{-1}(\gamma,t)-V^{-1}(\gamma,t) & = t \tilde{f}\left(\frac{\gamma-\omega_m}{t} \right) + U_0^{-1}(\omega_m) - \max_{0\leq \omega \leq \gamma} \left\{t \tilde{f}\left(\frac{\gamma-\omega}{t} \right) + V_0^{-1}(\omega)\right\} \\
& \leq t \tilde{f}\left(\frac{\gamma-\omega_m}{t} \right) + U_0^{-1}(\omega_m) - t \tilde{f}\left(\frac{\gamma-\omega_m}{t} \right) - V_0^{-1}(\omega_m) \\
& = U_0^{-1}(\omega_m) - V_0^{-1}(\omega_m) \leq \sup_{\omega \in [0,1]} \left| \left(U_0^{-1} - V_0^{-1}\right)(\omega)\right|
\end{align*}
By interchanging the roles of $ U^{-1}$ and $ V^{-1}$, we find that
\begin{align*}
\left|U^{-1}(\gamma,t)-V^{-1}(\gamma,t)\right| \leq \sup_{\omega \in [0,1]} \left| \left(U_0^{-1} - V_0^{-1}\right)(\omega)\right|.
\end{align*}
Taking the supremum on the left hand side yields \eqref{eq:winfty_init} for initial data in $\B$ with support consisting of one connected component. 

We extend the result to general initial data in $\B \cap BV (\R)$. Consider two sequences $u_0^n, v_0^n \in \B$ with $\supp(u_0^n)$ and $\supp(v_0^n)$ connected, such that $u_0^n \to u_0$ and $v_0^n \to v_0$  in $L^1(\R)$, and $\|u_0^n \|_{L^\infty}, \|v_0^n \|_{L^\infty} \leq \max \{ \|u_0 \|_{L^\infty}, \|v_0 \|_{L^\infty} \}$. Then, as proven in \cite[Th. 5.5]{Caretal2006}, for any $\epsilon$, we can choose sequences $u_0^n, v_0^n \in \B$ with connected support such that
\begin{align*}
W_\infty \big(u^n_0,v^n_0\big) \leq W_\infty (u_0,v_0) +\epsilon,
\end{align*} 
and as $u_0^n, v_0^n$ have connected supports, we know that
\begin{align}\label{eq:wp_init}
W_p \big(u^n(t),v^n(t)\big) \leq W_\infty \big(u^n(t),v^n(t)\big) \leq W_\infty (u_0,v_0) +\epsilon.
\end{align}
It is well-known that scalar conservation laws satisfy an $L^1$-contraction property for any $t>0$,
\begin{align*}
\|u^n(t)-u(t) \|_{L^1(\R)} &\leq \|u_0^n-u_0 \|_{L^1(\R)}, \\
\|v^n(t)-v(t) \|_{L^1(\R)} &\leq \|v_0^n-v_0 \|_{L^1(\R)}.
\end{align*}
Hence, for any $t \geq 0$, $u^n(t) \to u(t)$, $v^n(t) \to v(t)$ in $L^1(\R)$ as $n \to \infty$. Furthermore, $\|u^n(t)\|_{L^\infty} \leq \|u_0^n \|_{L^\infty} \leq \|u_0 \|_{L^\infty}$, and similarly for $v^n(t)$. It follows that $\supp \left( u^n(t) \right)$ and $\supp \left( v^n(t) \right)$ are uniformly bounded in $n$. Due to the bounded supports, the $p$th order moments of both $u^n(t)$ and $v^n(t)$ will also converge. As convergence in $W_p$ is equivalent to weak convergence and convergence of the $p$th order moment \cite[Thm. 7.12]{Vil03}, we can now take the limit as $n \to \infty$ to the left in \eqref{eq:wp_init},
\begin{align*}
W_p \big(u(t),v(t)\big) \leq W_\infty (u_0,v_0) +\epsilon.
\end{align*}
We conclude the proof by letting $p \to \infty$ and, as the left hand side does not depend on $\epsilon$, we send $\epsilon$ to zero.
\end{proof}
We now turn to the stability estimate in $W_\infty$ with respect to the flux functions.
\begin{proposition}\label{lem:winfty_flux}
Let $u_0 \in \B$, and let $f$ and $g$ satisfy \eqref{eq:f_condition} and be convex and superlinear. Then the respective entropy solutions $u$ and $v$ satisfy
\begin{align}\label{eq:winfty_flux}
W_\infty\big(u(t),v(t)\big) \leq t \sup_{\gamma \in [0,1]}\left|\left(\tilde{f}- \tilde{g}\right)\left(\frac{\gamma}{t}\right)\right|,
\end{align}
where $\tilde{f},\tilde{g}$ are the inverses of the Legendre transforms $f^*$ and $g^*$, restricted to $[0,\infty)$, of $f$ and $g$.
\end{proposition}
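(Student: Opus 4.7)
The plan is to mimic the proof of Proposition \ref{lem:winfty_init} for flux stability, exploiting that Lemma \ref{lem:priminv} expresses the inverse primitive $U^{-1}(\gamma,t)$ explicitly in terms of the initial data $U_0^{-1}$ and the Legendre transform data of the flux. Since here $u_0$ is shared by both equations, all the $U_0^{-1}$ contributions will cancel, and the difference $U^{-1}-V^{-1}$ will depend only on $\tilde{f}$ versus $\tilde{g}$.

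First, I reduce to the case that $\supp(u_0)$ consists of a single connected component so that Lemma \ref{lem:priminv} is applicable to the entropy solutions of both conservation laws simultaneously. Writing $U_0=V_0$ and subtracting the two Hopf--Lax style formulas gives
\begin{equation*}
U^{-1}(\gamma,t)-V^{-1}(\gamma,t) = \max_{0\leq \omega \leq \gamma}\!\left\{ t\tilde f\!\left(\tfrac{\gamma-\omega}{t}\right) + U_0^{-1}(\omega)\right\} - \max_{0\leq \omega \leq \gamma}\!\left\{ t\tilde g\!\left(\tfrac{\gamma-\omega}{t}\right) + U_0^{-1}(\omega)\right\}.
\end{equation*}
Let $\omega_m\in[0,\gamma]$ realize the first maximum. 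Plugging $\omega_m$ into the second maximum as a competitor gives the one-sided bound
\begin{equation*}
U^{-1}(\gamma,t)-V^{-1}(\gamma,t) \leq t\Bigl[\tilde f\bigl(\tfrac{\gamma-\omega_m}{t}\bigr)-\tilde g\bigl(\tfrac{\gamma-\omega_m}{t}\bigr)\Bigr] \leq t\sup_{\eta\in[0,\gamma/t]}\bigl|\tilde f(\eta)-\tilde g(\eta)\bigr|.
\end{equation*}
Exchanging the roles of $\tilde f$ and $\tilde g$ gives the same bound on $V^{-1}-U^{-1}$, hence an absolute value on the left. Taking the supremum over $\gamma\in[0,1]$ and using $(\gamma-\omega_m)/t\in[0,\gamma/t]\subseteq[0,1/t]$ yields
\begin{equation*}
W_\infty\bigl(u(t),v(t)\bigr) = \|U^{-1}(\cdot,t)-V^{-1}(\cdot,t)\|_{L^\infty([0,1])} \leq t\sup_{\gamma\in[0,1]}\bigl|(\tilde f-\tilde g)(\gamma/t)\bigr|,
\end{equation*}
which is the desired estimate under the connectedness assumption.

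Finally, to drop the connectedness hypothesis and cover all $u_0\in\B$, I would run the same approximation argument used at the end of the proof of Proposition \ref{lem:winfty_init}: pick a sequence $u_0^n\in\B$ with connected support, uniformly bounded in $L^\infty$, and converging to $u_0$ in $L^1(\R)$. Let $u^n(t),v^n(t)$ be the entropy solutions with fluxes $f,g$ respectively and common initial data $u_0^n$. The bound just established applies uniformly in $n$. The $L^1$-contraction for scalar conservation laws gives $u^n(t)\to u(t)$ and $v^n(t)\to v(t)$ in $L^1$, the uniform $L^\infty$ bound gives uniformly bounded supports, hence convergence of all $p$th-order moments, and therefore convergence in $W_p$ for every finite $p$. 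Passing $n\to\infty$ in the $W_p$ inequality and then sending $p\to\infty$ recovers \eqref{eq:winfty_flux} in full generality.

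The only delicate step I expect is the uniform control needed in the approximation: one has to verify that the right-hand side $t\sup_{\gamma\in[0,1]}|(\tilde f-\tilde g)(\gamma/t)|$ is insensitive to the approximation (it is, since it depends only on $f,g$ and $t$), and that the supports of $u^n(t),v^n(t)$ remain uniformly bounded so that $p$th-moment convergence actually passes to the limit. Both issues are handled exactly as in \cite{Caretal2006} and in the proof of Proposition \ref{lem:winfty_init}, so no new technical ingredient should be needed.
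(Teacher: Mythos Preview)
Your proposal is correct and follows essentially the same argument as the paper: both use Lemma~\ref{lem:priminv} for connected support, pick a maximizer $\omega_m$ in one Hopf--Lax expression and insert it as a competitor in the other (so that the common $U_0^{-1}$ term cancels), symmetrize, take the supremum over $\gamma\in[0,1]$, and then extend to general $u_0\in\B$ by the same $L^1$-approximation / $W_p\to W_\infty$ limiting procedure as in Proposition~\ref{lem:winfty_init}.
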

\begin{proof}
As in Proposition \ref{lem:winfty_init}, the primitive of $u$ can be found by the Hopf--Lax formula \eqref{eq:laxhopf}, and we start by assuming that $\supp(u_0)$ consists of one connected component such that we have an explicit expression for the difference between the inverses \eqref{eq:explicit}, due to Lemma \ref{lem:priminv}. Again, assume that $\omega_m$ realizes the maximum in the first expression. Then
\begin{align*}
 U^{-1}(\gamma,t)-V^{-1}(\gamma,t) & = t \tilde{f}\left(\frac{\gamma-\omega_m}{t} \right) + U_0^{-1}(\omega_m) - \max_{0\leq \omega \leq \gamma} \left\{t \tilde{g}\left(\frac{\gamma-\omega}{t} \right) + U_0^{-1}(\omega)\right\} \\
& \leq t \tilde{f}\left(\frac{\gamma-\omega_m}{t} \right) - t \tilde{g}\left(\frac{\gamma-\omega_m}{t} \right) \leq t \sup_{0 \leq \omega \leq \gamma } \left| \tilde{f}\left(\frac{\gamma-\omega_m}{t} \right) - \tilde{g}\left(\frac{\gamma-\omega_m}{t} \right)\right|,
\end{align*}
and, after interchanging the roles of $ U^{-1}(\gamma,t)$ and $V^{-1}(\gamma,t)$, we get
\begin{align*}
\left|  U^{-1}(\gamma,t)-V^{-1}(\gamma,t) \right| \leq t \sup_{0 \leq \omega \leq \gamma } \left| \tilde{f}\left(\frac{\gamma-\omega}{t} \right) - \tilde{g}\left(\frac{\gamma-\omega}{t} \right)\right|.
\end{align*}
Taking the supremum over $\gamma \in [0,1]$ results in \eqref{eq:winfty_flux} for $u_0$ with support consisting of one connected component.

We extend the result to general initial data in $\B$. Again we consider a sequence $u_0^n \in \B$ with $\supp(u_0^n)$ connected, such that $u_0^n \to u_0$ in $L^1(\R)$ and $\|u_0^n \|_{L^\infty} \leq \|u_0 \|_{L^\infty}$. Then for $u^n(t),v^n(t)$,
\begin{align}\label{eq:wp_flux}
W_p\big(u^n(t),v^n(t)\big) \leq W_\infty\big(u^n(t),v^n(t)\big) \leq t \sup_{\gamma \in [0,1]}\left|\left(\tilde{f}- \tilde{g}\right)\left(\frac{\gamma}{t}\right)\right|. 
\end{align} 
We use the $L^1$-contraction property,
\begin{align*}
\|u^n(t)-u(t) \|_{L^1(\R)} &\leq \|u_0^n-u_0 \|_{L^1(\R)}, \\
\|v^n(t)-v(t) \|_{L^1(\R)} &\leq \|u_0^n-u_0 \|_{L^1(\R)},
\end{align*}
to conclude that $u^n(t) \to u(t)$, $v^n(t) \to v(t)$ in $L^1(\R)$, and that the $p$th order moments of both $u^n(t)$ and $v^n(t)$ converge. Then we can take the limit as $n \to \infty$ in \eqref{eq:wp_flux},
\begin{align*}
W_p\big(u(t),v(t)\big) \leq t \sup_{\gamma \in [0,1]}\left|\left(\tilde{f}- \tilde{g}\right)\left(\frac{\gamma}{t}\right)\right|.
\end{align*} 
Letting $p \to \infty$ then yields \eqref{eq:winfty_flux}.
\end{proof}
The inequality \eqref{eq:winfty_flux} does not provide much information in general. But, if we consider a convex function $f$ and its piecewise linear interpolation, we show that the right hand side of \eqref{eq:winfty_flux} can be made small in the upcoming lemma. Recall the definition of the Legendre transform in \eqref{eq:legendre}. One can check that the Legendre transform of a piecewise linear, convex and continuous function,
\begin{align}\label{eq:interpol}
g(u) = g_j + \sigma_j(u-u_j), \quad \sigma_ j = \frac{g_{j+1}-g_j}{u_{j+1}-u_j}, \quad u_j \leq u \leq u_{j+1}, \quad j \in \Z\cap[-J,J],
\end{align}
is
\begin{align}\label{eq:lin_legendre}
g^*(p)= 
\begin{cases}
+ \infty, \qquad & p > \sigma_{-J}, \quad p< \sigma_J, \\
u_j p -g_j, \qquad & \sigma_{j-1} \leq p \leq \sigma_j.
\end{cases}
\end{align}

\begin{lemma}\label{lem:fg_zeroes}
Let $f$ be convex and superlinear, and let $g$ be the piecewise linear interpolation \eqref{eq:interpol} of $f$ with $g_j=f(u_j)$. Then
\begin{align*}
f^*(p)-g^*(p) & = 0, \qquad \textrm{for some} \quad p \in [\sigma_{j-1},\sigma_{j}], \ \quad \textrm{for all} \quad j \in \mathbb{Z}.
\end{align*}
\end{lemma}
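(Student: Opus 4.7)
The plan is to exploit the fact that the Legendre transform $f^*(p) = \sup_u\{pu - f(u)\}$ of a convex function is attained at $u^\ast$ precisely when $p$ lies in the subdifferential $\partial f(u^\ast)$; the strategy is to exhibit a slope $p \in [\sigma_{j-1},\sigma_j]$ at which the node $u_j$ itself realises the supremum defining $f^*(p)$, and then read off from \eqref{eq:lin_legendre} that $g^*(p) = u_j p - f(u_j) = f^*(p)$.

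First I would recall that since $f$ is convex and finite on $\mathbb{R}$ (it is in fact superlinear), the one-sided derivatives $f'_-(u_j)$ and $f'_+(u_j)$ exist and satisfy $f'_-(u_j) \leq f'_+(u_j)$, with $\partial f(u_j) = [f'_-(u_j),\, f'_+(u_j)]$ nonempty. Next I would use the monotonicity of secant slopes for convex functions: any chord slope to the left of $u_j$ is bounded above by $f'_-(u_j)$, while any chord slope to the right of $u_j$ is bounded below by $f'_+(u_j)$. Applying this to the specific chords defining $\sigma_{j-1}$ and $\sigma_j$ yields
\begin{equation*}
\sigma_{j-1} \;=\; \frac{f(u_j)-f(u_{j-1})}{u_j - u_{j-1}} \;\leq\; f'_-(u_j) \;\leq\; f'_+(u_j) \;\leq\; \frac{f(u_{j+1})-f(u_j)}{u_{j+1}-u_j} \;=\; \sigma_j,
\end{equation*}
so that $\partial f(u_j) \subseteq [\sigma_{j-1},\sigma_j]$.

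Finally I would pick any $p \in \partial f(u_j)$. The characterisation of the subdifferential gives $pu - f(u) \leq pu_j - f(u_j)$ for every $u \in \mathbb{R}$, hence $f^*(p) = p u_j - f(u_j)$. Since $g_j = f(u_j)$ by hypothesis and $p \in [\sigma_{j-1},\sigma_j]$, the formula \eqref{eq:lin_legendre} gives $g^*(p) = u_j p - g_j = u_j p - f(u_j)$, so $f^*(p) - g^*(p) = 0$, which proves the claim. There is no real obstacle here beyond being careful about non-differentiability of $f$; the essentially one-line argument via subdifferentials handles that case uniformly, so the proof reduces to three routine ingredients from convex analysis.
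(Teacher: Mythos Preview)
Your proof is correct and follows essentially the same approach as the paper: both argue that for any $p\in\partial f(u_j)$ the node $u_j$ realises the supremum in $f^*(p)$, then check that $\partial f(u_j)\subseteq[\sigma_{j-1},\sigma_j]$ so that \eqref{eq:lin_legendre} gives $g^*(p)=u_jp-f(u_j)=f^*(p)$. The only cosmetic difference is in the justification of the inclusion $\partial f(u_j)\subseteq[\sigma_{j-1},\sigma_j]$: you invoke the secant-slope monotonicity of convex functions directly, whereas the paper phrases it via $f\leq g$ with equality at $u_j$ (so any supporting line of $f$ at $u_j$ is also a supporting line of $g$ there, forcing its slope into $[\sigma_{j-1},\sigma_j]$).
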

\begin{proof}
As $f$ is convex and superlinear $g$ will also be. The same is true for $f^*$ and $g^*$. Also notice that $f\leq g$, so that $f^* \geq g^*$. 

From \eqref{eq:lin_legendre} we see that $g^*(p)= u_jp-f(u_j)$ for $p \in [\sigma_{j-1},\sigma_{j}]$ so that
\begin{align}\label{eq:expression}
f^*(p)-g^*(p) = \sup_{u \in \R} \left\{pu-f(u)\right\} - \left( u_jp-f(u_j) \right), \quad p \in [\sigma_{j-1},\sigma_{j}].
\end{align}
We claim that there exists $p \in [\sigma_{j-1},\sigma_{j}]$ such that $u_j$ realizes the supremum in the expression for $f^*$. For this $p$ \eqref{eq:expression} will be zero. For $p \in \partial f(u_j)$ (the sub-differential at $u_j$), $u_j$ realizes the supremum in \eqref{eq:expression}. If $\partial f(u_j) \subset [\sigma_{j-1},\sigma_{j}]$ we're done. But this has to be true as $f\leq g$, $f(u_j)-g(u_j)=0$ and $f$ is convex. 
\end{proof}

Fully equipped with estimates in $W_\infty$, we prove the $\Dx$ convergence rate.
\begin{theorem}\label{thrm:wass_infty}
Let $u_0 \in \B \cap BV(\R)$ and let $f$ be twice continuously differentiable and convex. Then the front tracking approximation converges towards the entropy solution of \eqref{eq:cons_law} at a rate of $O(\Dx)$, i.e.
\begin{align*}
W_\infty (u(t), u^{\delta,\Dx}(t)) \leq L(t) \Dx,  
\end{align*}
where $L(t)=1+t\max_{u \in [0,M +\delta]}|f''(u)|$.
\end{theorem}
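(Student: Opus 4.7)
The plan is to split $W_\infty\big(u(t), u^{\delta,\Dx}(t)\big)$ via a triangle inequality through an intermediate entropy solution. Let $v(t)$ denote the entropy solution of \eqref{eq:cons_law} with the original flux $f$ but with the piecewise-constant initial data $u_0^\Dx$. Then
\begin{align*}
W_\infty\big(u(t), u^{\delta,\Dx}(t)\big) \leq W_\infty\big(u(t), v(t)\big) + W_\infty\big(v(t), u^{\delta,\Dx}(t)\big).
\end{align*}
The first summand involves a single flux with two initial data and is controlled by the $W_\infty$-contractivity of Proposition~\ref{lem:winfty_init}; the second summand involves a single initial datum with two fluxes and is controlled by the flux-stability estimate of Proposition~\ref{lem:winfty_flux}.

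For the initial-data contribution, Proposition~\ref{lem:winfty_init} gives $W_\infty(u(t), v(t)) \leq W_\infty(u_0, u_0^\Dx)$. Since $u_0^\Dx$ replaces $u_0$ on each cell $\cell_i$ by its average while preserving the mass in that cell, there is a transport plan from $u_0$ to $u_0^\Dx$ keeping every unit of mass within its own cell, hence of $W_\infty$-cost at most $\Dx$. Equivalently, the inverse distribution functions $U_0^{-1}$ and $(U_0^\Dx)^{-1}$ map the same sub-interval of $[0,1]$ into $\cell_i$, so $\|U_0^{-1} - (U_0^\Dx)^{-1}\|_{L^\infty([0,1])} \leq \Dx$. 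Thus $W_\infty(u_0, u_0^\Dx) \leq \Dx$.

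For the flux contribution, Proposition~\ref{lem:winfty_flux} yields
\begin{align*}
W_\infty\big(v(t), u^{\delta,\Dx}(t)\big) \leq t \sup_{\gamma \in [0,1]} \left|\tilde{f}(\gamma/t) - \widetilde{f^\delta}(\gamma/t)\right|.
\end{align*}
To estimate the right-hand side, I would apply Lemma~\ref{lem:fg_zeroes}: at each breakpoint $u_j = j\delta$ of $f^\delta$, the value $p_j := f'(u_j)$ lies in the slope interval $[\sigma_{j-1}, \sigma_j]$ and realizes $f^*(p_j) = (f^\delta)^*(p_j) = p_j u_j - f(u_j) =: y_j$. Hence the inverses agree there: $\tilde{f}(y_j) = \widetilde{f^\delta}(y_j) = f'(u_j)$. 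Since both inverses are non-decreasing, for any $y \in [y_j, y_{j+1}]$ both $\tilde{f}(y)$ and $\widetilde{f^\delta}(y)$ are confined to $[f'(u_j), f'(u_{j+1})]$, an interval of length at most $\delta \max_{[u_j, u_{j+1}]} |f''|$ by the mean value theorem. Taking the supremum over $y$ and using $\delta \leq \Dx$ gives $\sup_y |\tilde{f}(y) - \widetilde{f^\delta}(y)| \leq \Dx \max_{u \in [0, M+\delta]} |f''(u)|$. Combining the two bounds yields $W_\infty(u(t), u^{\delta,\Dx}(t)) \leq \Dx(1 + t \max_{[0,M+\delta]} |f''|) = L(t)\Dx$.

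The main obstacle is technical rather than structural: Propositions~\ref{lem:winfty_init} and \ref{lem:winfty_flux} require the fluxes to be convex and superlinear on all of $\R$, whereas the front tracking flux $f^\delta$ is only defined (and is piecewise linear) on a bounded range. I would deal with this by extending $f^\delta$ superlinearly outside $[-M-\delta, M+\delta]$; since the $L^\infty$ bound $\|u^{\delta,\Dx}(t)\|_{L^\infty} \leq M$ keeps the range of the front tracking solution inside the interval where the extension coincides with the original $f^\delta$, the entropy solution is unaffected, while the Hopf--Lax representation underlying Propositions~\ref{lem:winfty_init}--\ref{lem:winfty_flux} becomes available. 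One must also verify that the extension satisfies condition \eqref{eq:f_condition}, which follows directly from the explicit piecewise linear form \eqref{eq:ftflux}.
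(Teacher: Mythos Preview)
Your proposal is correct and follows essentially the same route as the paper: the same triangle inequality through the entropy solution with flux $f$ and initial data $u_0^\Dx$, then Proposition~\ref{lem:winfty_init} for the initial-data term and Proposition~\ref{lem:winfty_flux} together with Lemma~\ref{lem:fg_zeroes} for the flux term. Your flux estimate is in fact slightly sharper than the paper's---by pinning down the zeros of $f^*-(f^\delta)^*$ at the explicit points $p_j=f'(j\delta)$ you get an interval of length $f'(u_{j+1})-f'(u_j)\leq\delta\max|f''|$, whereas the paper only uses $p_j^*\in[\sigma_{j-1},\sigma_j]$ and ends up with $|\sigma_{j+1}-\sigma_{j-1}|\leq 2\delta\max|f''|$---and you spell out the superlinearity extension that the paper leaves implicit.
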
 
\begin{proof}
By the triangle inequality we have
\begin{align*}
W_\infty (u(t), u^{\delta,\Dx}(t)) \leq W_\infty (u(t), u^{\Dx}(t)) + W_\infty (u^\Dx(t), u^{\delta,\Dx}(t)).
\end{align*}
As $f$ is $C^2$, one can check that the condition \eqref{eq:f_condition} holds for both $f$ and the approximation $f^\delta$ in \eqref{eq:ftflux}. Then, by Proposition \ref{lem:winfty_init},
\begin{align*}
W_\infty (u(t), u^{\Dx}(t)) \leq W_\infty (u_0, u^{\Dx}_0) \leq \Dx,
\end{align*}
where the last inequality follows from the fact that the primitives of $u_0$ and \eqref{eq:ftinitial} are both increasing and satisfy $U_0((i+\hf)\Dx)-U_0^\Dx((i+\hf)\Dx)=0$ for all $i \in \mathbb{Z}$. 

From Proposition \ref{lem:winfty_flux}, we have
\begin{align*}
W_\infty (u^\Dx(t), u^{\delta,\Dx}(t)) & \leq t \sup_{\gamma \in [0,1]}\left|\left(\tilde{f}- \tilde{g}\right)\left(\frac{\gamma}{t}\right)\right| \\
& = t \sup_{\gamma \in [0,1]}\left|\left((f^*)^{-1}- (g^*)^{-1}\right)\left(\frac{\gamma}{t}\right)\right| \\
& \leq t \max_i | \sigma_{j+1} - \sigma_{j-1} |= t \max_{u \in [0,M+\delta]}|f''(u)|2 \delta,
\end{align*}
where we in the last step have used Lemma \ref{lem:fg_zeroes} and the fact that $f^*$ and $g^*$ are increasing for $\gamma/t>0$.
\end{proof}
Finally, having established Theorem \ref{thrm:wass_infty}, we can conclude that the main theorem of this paper, Theorem \ref{thrm:main}, holds.

\section{Concluding remarks}\label{sec:conclude}
In this paper we have shown that the front tracking approximations to scalar one-dimensional conservation laws with convex fluxes converge at a rate of $\Dx^{1+\unitfrac{1}{p}}$ in the $p$-Wasserstein distance. This gives the front tracking method an advantage, in terms of guaranteed convergence rate, over (formally) second-order finite volume schemes for which no second-order convergence rate has been proven for general initial data. 

The convergence rate results in this paper are limited to $\Lip^+$ bounded initial data $u_0$. In the case of $\Lip^+$ unbounded $u_0$, it is well-known that the solution to \eqref{eq:cons_law} satisfies
\begin{align}\label{eq:strongOSLC}
\frac{u(x+z,t)-u(x,t)}{z} \leq \frac{C}{t}, \quad t>0,
\end{align}
whenever $f$ is strongly convex, $f'' \geq \alpha > 0$. Let $u(t)$ and $v(t)$ be solutions to \eqref{eq:2cons_laws}, where $v_0$ is the piecewise constant projection \eqref{eq:ftinitial} to $u_0$, and $\|g-f\|_{L^\infty} \leq O(\Dx^2)$, where $g$ is strongly convex as well. Then (by an approach similar to the one in Proposition \ref{lem:w1stability}) preliminary calculations indicate that $ W_1\big(u(t),v(t)\big) = O\left( \Dx^2 \log |\Dx|\right)$ for $\Lip^+$ unbounded $u_0$. The front tracking flux $g=f^\delta$ is piecewise linear, but as it is an approximation to the strongly convex function $f$, the front tracking approximation should satisfy a discrete version of \eqref{eq:strongOSLC}. This might be sufficient to prove a convergence rate of $\Dx^2 \log |\Dx|$ in the $\Lip^+$ unbounded case, but it needs to be investigated further.

The main theorem in this paper strongly depend on the convexity of the flux $f$. As mentioned in Remark \ref{rem:hjft}, the $\Dx^2$ convergence rate in $W_1$ that one can deduce from \cite{KR02} can be extended to non-convex fluxes as long as $u_0 \in C_c^1$. Remark \ref{rem:hjft} and the discussion on $\Lip^+$ unbounded $u_0$ indicate that the rate might be lower for more general initial data. Whether the $\Dx$ rate in $W_\infty$ can be extended to the non-convex case is unclear. The proofs in Section \ref{sec:winfty} depend on an explicit expression for the generalized inverse of the primitive. Due to the more complex nature of $u(t)$ in the non-convex case, a feasible expression for the generalized inverse is currently out of reach.

%
%

\begin{thebibliography}{99.}

\bibitem{BoPert98}
F.~Bouchut and B.~Perthame.
\newblock {Kru\v zkov's estimates for scalar conservation laws revisited}.
\newblock {\em Trans. Amer. Math. Soc.}, 350(7):2847--2870, 1998.


\bibitem{Caretal2006}
J.~A.~Carrillo, M.~Di Francesco and C.~Lattanzio.
\newblock {Contractivity of Wasserstein metrics and asymptotic profiles for scalar conservation laws}.
\newblock {\em J. Diff. Eqn.}, 231(2):425--458, 2006.

\bibitem{CarTosc05}
J.~A.~Carrillo and G.~Toscani.
\newblock {Wasserstein metric and large-time asymptotics for nonlinear diffusion equations}.
\newblock in {\em New Trends in Mathematical Physics(In Honour of the Salvatore Rionero 70th Birthday)}, 2005.

\bibitem{Daf72}
C.~Dafermos.
\newblock {Polygonal approximation of solutions of the initial value problem for a conservation law}.
\newblock {\em J. Math. Anal.},
  38:33--41, 1972.
  
\bibitem{Daf77}
C.~Dafermos.
\newblock {Generalized characteristics and the structure of solutions of hyperbolic conservation laws}.
\newblock {\em Indiana Uni. Math. J.},
  26(6):1097--1119, 1977.


\bibitem{Evans}
L.~C.~Evans.
\newblock {\em {Partial Differential Equations}}.
\newblock AMS, Providence, RI, 1998.



\bibitem{FjoSo16}
U.~S. Fjordholm and S. Solem.
\newblock {Second-order convergence of monotone schemes for conservation laws}.
\newblock {\em SIAM J. Numer. Anal.},
  54(3):1920--1945, 2016.



\bibitem{HHK88}
H. Holden, L. Holden and R. H\o egh-Krohn.
\newblock {A numerical method for first order nonlinear scalar hyperbolic conservation laws in one-dimension}.
\newblock {\em Comput. Math. Appl.},
  15:595--602, 1988.
  
\bibitem{HR15}
H. Holden and N.~H. Risebro.
\newblock {\em {Front Tracking for Hyperbolic Conservation Laws}, 2nd edn}.
\newblock Springer, Berlin Heidelberg, 2015.

\bibitem{HR93}
H. Holden and N. H. Risebro.
\newblock {A method of fractional steps for scalar conservation laws without
the CFL condition}.
\newblock {\em Math. Comp.},
    60:221--232, 1993.

\bibitem{hong96}
B.~I. Hong.
\newblock {Regularity for Hamilton--Jacobi equations via approximation}.
\newblock {\em Bull. Austral. Math. Soc.},
  51:195--213, 1995.

\bibitem{Jen79}
G.~Jennings.
\newblock {Piecewise smooth solutions of a single conservation law exist}.
\newblock {\em Adv. in Math.}, 33:192--205, 1979.

\bibitem{Karlsen94}
K. H. Karlsen.
\newblock {On the accuracy of a numerical method for two-dimensional scalar conservation laws based on dimensional splitting and front tracking}.
\newblock {Preprint Series 30}, Department of Mathematics, University of Oslo, 1994.

\bibitem{KR02}
K.~H. Karlsen and N.~H. Risebro.
\newblock {A note on front tracking and the equivalence between viscosity solutions of Hamilton-Jacobi equations and entropy solutions of scalar conservation laws}.
\newblock {\em Nonlin. Anal.},
  50:455--469, 2002.
  
\bibitem{Kruzkov70}
S.~N. Kru\v zkov
\newblock {First order quasi-linear equations in several independent variables}.
\newblock {\em Math. USSR Sbornik},
  10:217--243, 1970.

\bibitem{Kuz76}
N.~N. Kuznetsov.
\newblock {Accuracy of some approximate methods for computing the weak
  solutions of a first-order quasi-linear equation}.
\newblock {\em USSR Computational Mathematics and Mathematical Physics},
  16(6):105--119, 1976.
  
\bibitem{Luc86}
B.~J. Lucier.
\newblock {A Moving Mesh Numerical Method for Hyperbolic Conservation Laws}.
\newblock {\em Math. Comp.},
  46(173):59--69, 1986.

\bibitem{nessy_conv92}
H.~Nessyahu and E.~Tadmor.
\newblock {The convergence rate of approximate solutions for nonlinear scalar
  conservation laws}.
\newblock {\em SIAM J. Numer. Anal.}, 29:1505--1519, 1992.

\bibitem{nessy_conv}
H.~Nessyahu, E.~Tadmor, and T.~Tassa.
\newblock {The convergence rate of {G}odunov type schemes}.
\newblock {\em SIAM J. Numer. Anal.}, 31:1--16, 1994.


\bibitem{Oleinik59}
O.~A. Ole\u{\i}nik.
\newblock {Uniqueness and stability of the generalized solution of the Cauchy problem for a quasi-linear equation}.
\newblock {\em Uspekhi Mat. Nauk}, 14:165--170, 1959.

\bibitem{Oleinik63}
O.~A. Ole\u{\i}nik.
\newblock {Discontinuous solutions of non-linear differential equations}.
\newblock {\em Amer. Math. Soc. Transl. Ser.}, 26:95--172, 1963.

\bibitem{Poup97}
F.~Poupaud and M.~Rascle.
\newblock {Measure Solutions to the Linear Multi-dimensional Transport Equation with Non-smooth Coefficients}.
\newblock {\em Commun. Part. Diff. Eq.}, 22:337--358, 1997.


\bibitem{Sab97}
F.~\c Sabac.
\newblock {The Optimal Convergence Rate of Monotone Finite Difference Methods
  for Hyperbolic Conservation Laws}.
\newblock {\em SIAM J. Numer. Anal.}, 34(6):2306--2318, 1997.



\bibitem{Tad91}
E.~Tadmor.
\newblock {Local Error Estimates for Discontinuous Solutions of Nonlinear
  Hyperbolic Equations}.
\newblock {\em SIAM J. Numer. Anal.}, 28(4):891--906, 1991.

\bibitem{TT93}
E.~Tadmor and T.~Tassa.
\newblock {On the piecewise smoothness of entropy solutions to scalar conservation laws}.
\newblock {\em Comm. on PDE's}, 18:1631--1652, 1993.

\bibitem{Teng94}
Z.-H. Teng.
\newblock {On the accuracy of fractional step methods for conservation laws in two dimensions}.
\newblock {\em SIAM J. Num. Anal.}, 31:43--63, 1994.

\bibitem{zhang_teng}
Z.-H. Teng and P.~Zhang.
\newblock {Optimal {L}1-Rate of Convergence for The Viscosity Method and
  Monotone Scheme to Piecewise Constant Solutions with Shocks}.
\newblock {\em SIAM J. Numer. Anal.}, 34:959--978, 1997.

\bibitem{Vil03}
C.~Villani.
\newblock {\em {Topics in Optimal Transportation}}.
\newblock AMS, Providence, RI, 2003.

\bibitem{Volpert67}
A. Vol'pert.
\newblock {The spaces BV and quasilinear equations}.
\newblock {\em USSR Math. Sb}, 2:225-267, 1967.

\end{thebibliography}
%

\end{document}